\newtheorem{theorem}{Theorem}
\newtheorem{lemma}[theorem]{Lemma}
\newtheorem{proposition}[theorem]{Proposition}
\tikzset{->-/.style={decoration={
markings,
mark=at position #1 with {\arrow{>}}},postaction={decorate}}}
\pgfplotsset{width=7cm,compat=newest,ticks=none}
\definecolor{royalpurple}{rgb}{0.47, 0.32, 0.66}
\definecolor{pastelgreen}{rgb}{0.47, 0.87, 0.47}
\definecolor{cornellred}{rgb}{0.7, 0.11, 0.11}
\definecolor{pastelorange}{rgb}{1.0, 0.7, 0.28}
\definecolor{darkred}{rgb}{0.55, 0.0, 0.0}
\definecolor{darkpastelgreen}{rgb}{0.01, 0.75, 0.24}
\title{Far-from-equilibrium travelling pulses in sloped semi-arid environments driven by autotoxicity effects}
\author{Gabriele Grif\`o$^{1,2}$, Annalisa Iuorio$^{3,4,\ast}$, 
Frits Veerman$^{5}$\\ 
{$^{1}$\small Department of Mathematical, Computer, Physical and Earth Sciences, University of Messina,} \\
{\small V.le F. Stagno D'Alcontres 31, I-98166 Messina, Italy.}\\
{$^{2}$\small Department of Engineering, University of Messina, C.da di Dio, I-98166 Messina, Italy}\\
{$^{3}$\small Department of Engineering, Parthenope University of Naples,}\\ {\small Centro Direzionale - Isola C4, 80143 Naples, Italy}\\
{$^{4}$\small Faculty of Mathematics, University of Vienna, Oskar-Morgenstern-Platz 1, 1090 Vienna, Austria}\\
{$^{5}$\small Mathematical Institute, University of Leiden, Einsteinweg 55, 2333 CC Leiden, The Netherlands}\\
{\small $^{(\ast)}$ Corresponding author: annalisa.iuorio@uniparthenope.it}}
\date{}
\begin{document}

\maketitle

\begin{abstract}
In this work, an extension of the 1D Klausmeier model that accounts for the toxicity compounds is considered and the occurrence of travelling stripes is investigated. Numerical simulations are firstly conducted to capture the qualitative behaviours of the pulse--type solutions and, then, geometric singular perturbation theory is used to prove the existence of such travelling pulses by constructing the corresponding homoclinic orbits in the associated $4$-dimensional system. A scaling analysis on the investigated model is performed to identify the asymptotic scaling regime in which travelling pulses can be constructed. Biological observations are extracted from the analytical results and the role of autotoxicity in travelling patterns is emphasized. Finally, the analytically constructed solutions are compared with the numerical ones, leading to a good agreement that confirms the validity of the conducted analysis. Numerical investigations are also carried out in order to gain additional information on vegetation dynamics.\\

\textbf{Keywords}: Travelling pulses, vegetation patterns, geometric singular perturbation theory, autotoxicity, reaction--diffusion--advection models
\end{abstract}

\section{Introduction}

One of the major challenges in the context of climate change is desertification, which poses a danger for the durability of dryland areas \cite{UN22}. 
To predict and mitigate desertification catastrophes, considerable effort has been spent to study ecosystem response to rapid change, and to identify indicators that provide reliable information on ecosystem health and robustness \cite{de1998early,Tirabassi2014}. In this context, the occurrence and dynamics of vegetation patterns have been increasingly recognised as a key predictive indicator \cite{Gowda2018,May1977,Rietkerk1997site,Saco2018}. Recent studies \cite{Bastiaansen2020, Rietkerk2021} show how vegetation patterns can be interpreted as a signature of ecosystem resilience, providing a gradual (in contrast to catastrophic) sequence of patterned states towards the desert one. 


In arid and semi-arid environments, the self-organisation of biomass can mainly be attributed to local positive feedback between water and biomass \cite{Meron2015}. However, vegetation patterns are also observed in non-arid (non water-limited) ecosystems, suggesting the presence of alternative mechanisms that may contribute to the emergence of spatial vegetation patterns. One of these mechanisms, called \emph{autotoxicity}, is an example of plant-soil negative feedback that arises from the presence of soil-borne pathogens, changing the composition of soil microbial communities, and/or the accumulation of autotoxic compounds from decomposing plant litter \cite{Mazzoleni2007, Mazzoleni2015}.


Although the emergence of stationary and oscillatory vegetation patterns is believed to be ubiquitous, these patterning phenomena tend to occur over long timescales and far away from populated areas, making on site experiments and detailed observations challenging to perform. Therefore, mathematical modelling plays a crucial role in analysing these pattern formation processes, providing analytical tools to assess an ecosystem's response to stressors.
There is an extensive body of mathematical literature on reproducing spatially periodic vegetation patterns and predicting their spatial evolution, using various models \cite{Klausmeier1999, Hillerislambers2001, Rietkerk2002, Sherratt2005, Siteur2014, Meron2015, Carteni2012, VanDerStelt2013, Marasco2020, Eigentler2020, Iuorio2023pre, Iuorio2023preII, Spiliotis2023sub}.
Among these models, the Klausmeier model \cite{Klausmeier1999} is one of the most simple systems still able to reproduce the occurrence of vegetation patterns as migrating bands. The Klausmeier model is a two-compartment reaction--advection--diffusion model describing water and biomass evolution in arid and semi-arid enviroments. However, this model does not take other processes into account that may influence vegetation dynamics. Therefore, several extensions to the Klausmeier model have been proposed, that include: 
(i) the emergence of stationary patterns on flat terrains \cite{Kealy2012, Zelnik2013, Sun2013, Consolo2022II, Grifo2023II, Curro2023}, (ii) the influence of inertial effects \cite{Consolo2022PRE, Consolo2022II, Consolo2022III, Grifo2023, Grifo2023II, Curro2023}, (iii) the presence of secondary seed dispersal \cite{Thompson2009, Thompson2014, Consolo2022III, Grifo2023}, (iv) the occurrence of toxicity compounds \cite{Marasco2013, Marasco2014, Iuorio2021, Consolo2024}, and (v) a finite soil carrying capacity \cite{Bastiaansen2019, Eigentler2021, Consolo2023, Byrnes2023}.\\

To the best of our knowledge, the existing literature on water-biomass-toxicity systems focuses on flat semi-arid environments \cite{Marasco2013, Marasco2014, Iuorio2021, Consolo2024}. We aim to eludicate the influence of autotoxicity on migrating vegetation bands in sloped semi-arid terrains, by means of a generalisation of the Klausmeier model. We study far--from--equilibrium solutions using geometric singular perturbation theory, in particular solutions that are bi-asymptotic to the desert state. We also present the results of numerical simulations that reflect our analytical results, and that allow us to extract additional information on the influence of autotoxicity on travelling vegetation pulses.

This paper is organised as follows. In Section \ref{sec:model}, we introduce an extension of the original Klausmeier model, adding a toxicity component. In the same section, we present the results of numerical simulations that show the emergence of migrating vegetation bands. In Section \ref{sec:Existence}, a scaling analysis is performed to identify the asymptotic scaling regime in which travelling vegetation bands can be found. We construct a pulse--type solution in the singular limit, and prove its persistence. In Section \ref{sec:numcont}, we use numerical continuation methods to study the behaviour of the constructed solution for a variety of parameter ranges, to illustrate and extend the asymptotic analysis in Section \ref{sec:Existence}. Future directions and concluding remarks are presented in Section \ref{sec:Conclusion}.

\section{The model}
\label{sec:model}
Following existing literature in which the influence of toxic compounds on vegetation dynamics is studied \cite{Carteni2012, Marasco2013, Marasco2014, Iuorio2021}, we formulate a generalisation of the two-compartment Klausmeier model \cite{Klausmeier1999}, incorporating the effect of autotoxicity. In particular, we add a ordinary differential equation (ODE) that describes the temporal evolution of a toxic compound. Additional kinetic terms describe the interaction between the biomass and the toxic compound. Moreover, assuming a ramp-like topography, we reduce the spatial dimension of the model to focus on the principal direction of the slope. The resulting model reads


\begin{equation}
\begin{aligned}
W_t &= p - r B^2 W - l W + \nu W_x, \medskip \\
B_t &= c B^2 W - (d + sT) B + D_B B_{xx}, \medskip \\
T_t &= q (d + sT) B - (k + wp)T,
\end{aligned}
\label{eq:dimmod}
\end{equation}
in which $W(x,t)$, $B(x,t)$, and $T(x,t)$ represent the surface water, biomass, and autotoxicity densities at location $x \in \Omega \subset{\mathbb{R}}$ (the positive $x$-direction being uphill) and time $t \in \mathbb{R}^+$, respectively. 

The motion of surface water $W$ is dominated by anisotropic transport, modelled by an advection term with speed $\nu>0$, mimicking downhill water flow. The biomass $B$ is assumed to diffuse isotropically with diffusion coefficient $D_B$. In addition, the biomass density increases due to water availability with growth rate $c$ and decreases both due to intrinsic mortality with rate $d$, and due to the presence of toxic compounds with sensitivity $s$. The surface water density is fed by precipitation $p$ (assumed to be constant over the intrinsic time scale of our model), and decreases both due to evaporation or drainage with rate $l$ and due to water uptake by biomass with rate $r$. The additional toxicity component $T$ decreases due to natural decay with rate $k$, and is washed out by precipitation proportional to $w$. The toxicity increases due to the decomposition of biomass, proportional to $q$. For a detailed description of all the parameters of system \eqref{eq:dimmod}, we refer to Table \ref{table1}.\\
\begin{table}[b!]
\centering
\begin{tabular}{l l l}
    \hline
    Parameter & Description & Units \\
    \hline
    $c$ & Growth rate of $B$ due to water uptake & m$^4$d$^{-1}$kg$^{-2}$ \\
    $d$ & Death rate of biomass $B$ & d$^{-1}$ \\
    $k$ & Decay rate of toxicity $T$ & d$^{-1}$ \\
    $l$ & Water loss due to evaporation or drainage & d$^{-1}$ \\
    $p$ & Precipitation rate & kg d$^{-1}$ m$^{-2}$ \\
    $q$ & Proportion of toxins in dead biomass & - \\
    $r$ & Rate of water uptake & m$^4$d$^{-1}$kg$^{-2}$ \\
    $s$ & Sensitivity of plants to toxicity $T$ & m$^2$d$^{-1}$kg$^{-1}$ \\
    $w$ & Washing out of toxins by precipitation & m$^2$kg$^{-1}$ \\
    $D_B$ & Diffusion coefficient of biomass $B$ & m$^2$d$^{-1}$ \\
    $\nu$ & Water advection speed & m\, d$^{-1}$ \\
    \hline
\end{tabular}
\\
\caption{An overview of the parameters of model \eqref{eq:dimmod}. 
}
\label{table1}
\end{table}

We nondimensionalise model \eqref{eq:dimmod} by applying the rescaling

\begin{equation}\label{eq:rescaling_nondimensionalisation}
\tilde{x}=\sqrt{\frac{l}{D_B}} x,
\hspace{0.5cm}
\tilde{t}=l\, t, 
\hspace{0.5cm}
W=\frac{\sqrt{l r}}{c}U,
\hspace{0.5cm}
B=\sqrt{\frac{l}{r}}V, 
\hspace{0.5cm}
T=\frac{q l \sqrt{l}}{\left( k + pw \right)\sqrt{r}} S.     
\end{equation}

The associated nondimensional parameters are defined as

\begin{equation}\label{eq:rescaling_nondimensionalisation_pars}
\begin{array}{c}
\mathcal{A} = \frac{c p}{l \sqrt{l r}},
\hspace{0.7cm}
\mathcal{B} = \frac{d}{l},
\hspace{0.7cm}
\mathcal{D} = \frac{l}{k + p w}, 
\medskip \\
\mathcal{H} = \frac{s q \sqrt{l}}{(k + p w)\sqrt{r}},
\hspace{0.7cm}
\varepsilon=\frac{\sqrt{D_B l}}{\nu}. 
\end{array}
\end{equation}

Here, $\mathcal{A}$ is proportional to the precipitation rate $p$ and hence can be interpreted (by fixing all other dimensional parameters) as its nondimensional counterpart. The parameter $\mathcal{B}$ (resp.~$\mathcal{D}$) represents the ratio between the (linear) biomass and water (resp.~water and toxicity) loss. 
Analogous to $\mathcal{A}$, the parameter $\mathcal{H}$ is directly proportional to the biomass sensitivity to autotoxicity $s$; therefore, it can be interpreted (considering all other parameters fixed) as the nondimensional version of this parameter. The parameter $\varepsilon$ quantifies the ratio between the (normalised) diffusion speed of the biomass and the downhill advection speed of the surface water. As the downhill flow of water is usually significantly faster than the spread of biomass due to growth, we assume that $\varepsilon$ is asymptotically small, i.e. $0<\mathcal{\varepsilon}\ll 1$. This parameter will play a pivotal role in the analysis in Section \ref{sec:Existence}.\\
Using the rescaling \eqref{eq:rescaling_nondimensionalisation} and the nondimensional parameters \eqref{eq:rescaling_nondimensionalisation_pars}, system \eqref{eq:dimmod} becomes (dropping the tilde notation for the dependent variables $\tilde{x}$ and $\tilde{t}$)

\begin{equation}
\begin{aligned}
U_{t} &= \mathcal{A} - U - V^2 U +\varepsilon^{-1} U_{x}, \medskip \\
V_{t} &= V^2 U - \mathcal{B} V - \mathcal{H} S V + V_{xx}, \medskip \\
\mathcal{D} S_{t} &= \mathcal{B} V + \mathcal{H} S V - S.
\end{aligned}
\label{eq:ndimmod}
\end{equation}
Note that the original Klausmeier model \cite{Klausmeier1999} is a special case of model \eqref{eq:ndimmod}: taking $\mathcal{H}=0$ fully decouples the autotoxicity equation.\\

\subsection{Preliminary observations and main result}
The influence of autotoxicity can already be observed in the spatially homogeneous steady states of system \eqref{eq:ndimmod}. In addition to the trivial desert state
\begin{equation}
\label{eq:equil0}
\left(U_\ast,V_\ast,S_\ast\right)=\left(\mathcal{A},0,0\right),
\end{equation}
system \eqref{eq:ndimmod} admits two nontrivial vegetated states
\begin{equation}
\label{eq:equilpm}
\left(U_{\pm},V_{\pm},S_{\pm}\right)=\left(\frac{\mathcal{A}}{1 + V_\pm^2},\frac{\mathcal{A}\pm \sqrt{\mathcal{A}^2-4\mathcal{B}\left(\mathcal{B}+\mathcal{A}\mathcal{H}\right)}}{2\left(\mathcal{B}+\mathcal{A}\mathcal{H}\right)},\frac{\mathcal{B}V_{\pm}}{1-\mathcal{H}V_{\pm}}\right)
\end{equation}
which explicitly depend on the value of the toxicity coupling parameter $\mathcal{H}$. In particular, these nontrivial spatially homogeneous steady states exist if and only if $\mathcal{A}>2\mathcal{B}\left(\mathcal{H}+\sqrt{1+\mathcal{H}^2}\right)$, which provides a nontrivial condition on the original `Klausmeier' parameters $\mathcal{A}$ and $\mathcal{B}$. This condition reduces to the non-toxic Klausmeier condition $\mathcal{A} > 2 \mathcal{B}$ in the limit $\mathcal{H}=0$. 
\\

Existing theoretical and experimental works \cite{Tongway2001, Dunkerley2018,Carter2018,Sewalt2017} show that vegetation patterns on sloped semi-arid terrain are predominantly characterised by pulse or wave train solutions that originate from the desert state. From a mathematical viewpoint, these patterns are determined by far--from--equilibrium dynamics that are beyond the reach of classical tools such as linear or weakly nonlinear stability analysis \cite{Murray,Hoyle,Doelman2018in}.\\ Direct numerical simulation of model \eqref{eq:ndimmod} confirms this observation. Using Matlab\textsuperscript{\textregistered} \cite{MATLAB}, we integrate \eqref{eq:ndimmod} over a spatial domain of length $L=1000$ with periodic boundary conditions, with constant initial data for both water $U$ and autotoxicity $S$, i.e. $U(x,0) = 0.5$ and $S(x,0) = 0$ $\forall x \in [0,L]$. For the vegetation variable $V$, we take as initial data a Gaussian pulse centered in $x=300$ with standard deviation $\sigma=\frac{2}{5}$ and amplitude $V_{max}=10$. The model parameters are fixed (in accordance with \cite{Klausmeier1999, Carter2018, Iuorio2021}) as  $\mathcal{A}=1.2$, $\mathcal{B}=0.45$, $\varepsilon=0.005$, $\mathcal{D}=4.5$ and $\mathcal{H}=1$. In Figure \ref{fig:num}(d), the space--time evolution of the biomass density clearly indicates a narrow peak travelling at constant speed through the domain. Figure \ref{fig:num} a)--c) depicts the spatial profile of this travelling pulse, highlighting the nonlinear and spatially multi-scale nature of this solution. In particular, the component profiles suggest a hierarchy of three asymptotically distinct spatial scales, which we denote as `superslow', `slow' and `fast'. Note that this terminology is slightly misleading, as these scales are not connected to the \emph{temporal} evolution of the pulse profile; rather, they indicate how the pulse profile changes as a function of $x$ (for fixed $t$). Using these terms, $V$ is observed to change on the fast spatial scale; the profile of $S$ displays both slow and fast changes, and the profile of $U$ encorporates both superslow and fast behaviour. This spatial scale separation will play a crucial role in the analysis in Section \ref{sec:Existence}.\\

\begin{figure}[h!]
	\centering
	\includegraphics[width=1\textwidth]{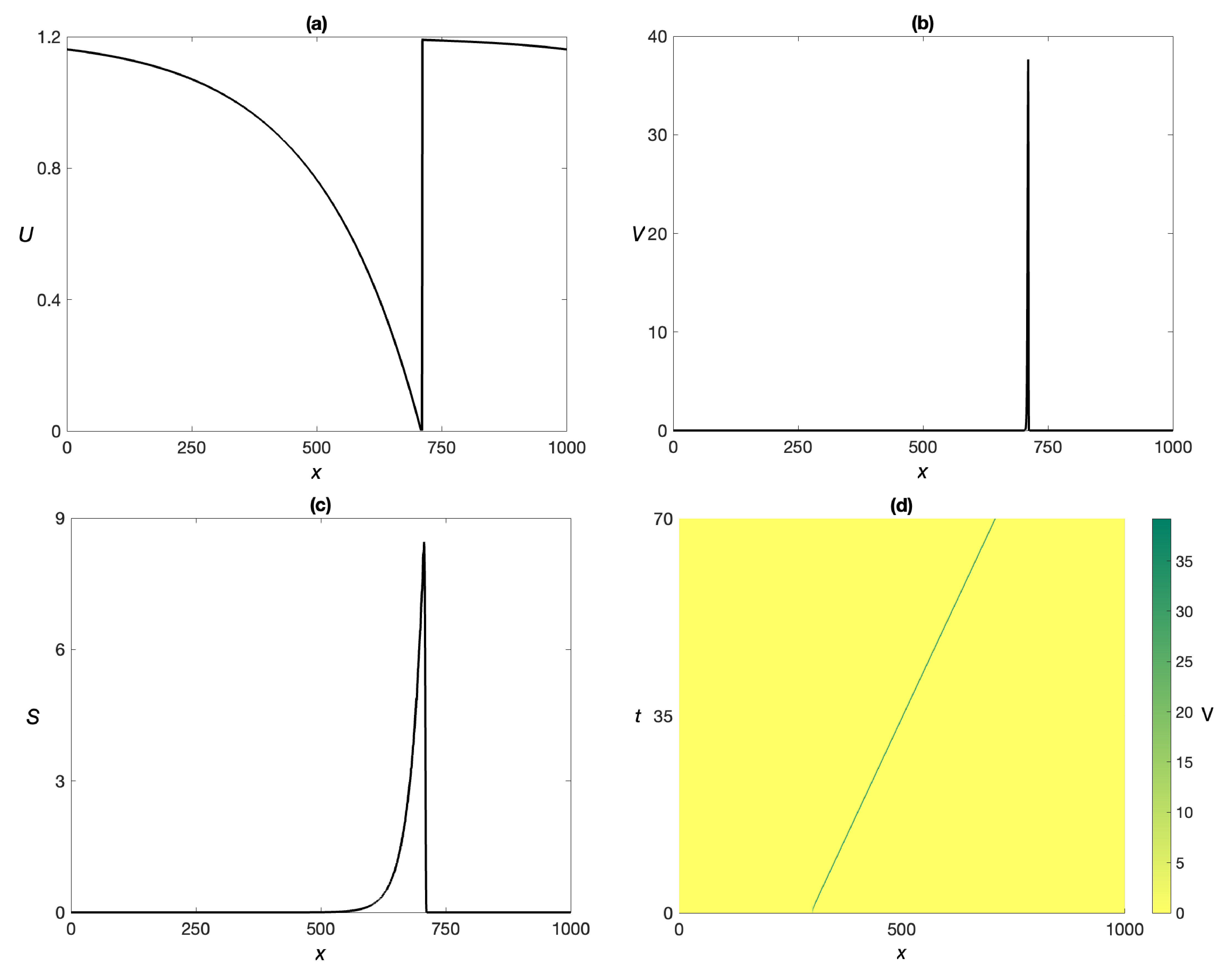}
	\caption{
 Spatial profiles for surface water $U$ (a), biomass $V$ (b) and toxicity $S$ (c) of a travelling pulse, obtained by numerical integration of system \eqref{eq:ndimmod} over a domain with length $L=1000$ with periodic boundary conditions. The parameter values are $\mathcal{A}=1.2$, $\mathcal{B}=0.45$, $\varepsilon=0.005$, $\mathcal{D}=4.5$, and $\mathcal{H}=1$.\\
}
	\label{fig:num}
\end{figure}

Inspired by the observations depicted in Figure \ref{fig:num}, we investigate in this paper the existence of travelling pulses with a fixed wave speed $\mathcal{C}>0$. The existence of far--from--equilibrium pulse--type solutions has been studied in previous works, both in the context of specific models \cite{Carter2018, Bastiaansen2019,Veerman2013} and general classes of reaction-diffusion systems \cite{Doelman2015,Doelman2001}, using geometric singular perturbation theory (GSPT). We follow an analogous approach for our model \eqref{eq:ndimmod} that includes autotoxicity. As such, our work can be compared directly to \cite{Carter2018}, where GSPT was used to establish the existence of travelling pulses in \eqref{eq:ndimmod} in the absence of autotoxicity, i.e. for $\mathcal{H}=0$. In comparison with \cite{Carter2018}, we show that autotoxicity not only introduces novel behaviour --which is, from a general viewpoint, to be expected when considering a model of higher complexity-- but also \emph{simplifies} the asymptotic analysis by avoiding the need to use geometric blow-up techniques, as was necessary in \cite{Carter2018}.\\

Our main analytical result is presented in the following Theorem:
\begin{theorem} \label{thm:main}
For $0 < \varepsilon \ll 1$ sufficiently small, there exists a unique $\theta_0>0$ such that there exists a travelling wave solution $\left( U, V, S \right) (x,t) = \left( U, V, S \right) (x-\mathcal{C} t)$ of system \eqref{eq:ndimmod} with wave speed $\mathcal{C}=\left( \frac{\mathcal{A}^2 \theta_0^2}{\varepsilon} \right)^{1/3} + \mathcal{O}(1)$.
\end{theorem}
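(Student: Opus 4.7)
The plan is to pass to the travelling wave coordinate $\xi = x - \mathcal{C}t$ in system \eqref{eq:ndimmod}, converting the PDE into a four-dimensional first-order ODE system in $(U, V, V_\xi, S)$. A travelling pulse satisfying $(U, V, S)(\pm\infty) = (\mathcal{A}, 0, 0)$ then corresponds to a homoclinic orbit to the desert fixed point in this phase space. I would first carry out a scaling analysis: motivated by the fact that the wave speed must compete with the $\varepsilon^{-1}$ advection coefficient in the $U$-equation, I set $\mathcal{C} = \theta\,\varepsilon^{-1/3}$ for $\theta > 0$ of order unity and read off, from the rescaled ODE, three asymptotically distinct spatial scales: a fast scale $\xi$ on which the biomass $V$ evolves, an intermediate scale $\zeta = \varepsilon^{1/3}\xi$ on which $S$ relaxes (through the factor $(\mathcal{C}\mathcal{D})^{-1} = \varepsilon^{1/3}/(\theta\mathcal{D})$), and a superslow scale $\tau = \varepsilon\xi$ on which $U$ relaxes. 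This is exactly the three-scale structure visible in Figure \ref{fig:num}.

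Within this scale separation I would apply geometric singular perturbation theory. The two-dimensional critical manifold $\mathcal{M}_0 = \{V = 0,\ V_\xi = 0\}$ is parameterised by $(U, S)$, and the fast subsystem, essentially $V_{\xi\xi} + \mathcal{C}V_\xi + V^2 U - (\mathcal{B} + \mathcal{H}S)V = 0$ with $U$ and $S$ frozen, has a hyperbolic saddle at $V = 0$ whose eigenvalues remain bounded away from the imaginary axis for all $(U, S)$ in a neighbourhood of $(\mathcal{A}, 0)$; in particular, $\mathcal{M}_0$ is normally hyperbolic up to and including the desert state. The reduced slow flow on $\mathcal{M}_0$ decouples: $S$ decays on the slow scale and $U$ on the superslow scale. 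I would then assemble a singular homoclinic from three pieces: a fast excursion launched from $\mathcal{M}_0$ at $(U, S) = (\mathcal{A}, 0)$ along the unstable fast fibre of the saddle; a touch-down back onto $\mathcal{M}_0$ at a shifted base point $(\mathcal{A} + \Delta U, \Delta S)$ with $\Delta U$ and $\Delta S$ determined by integrating the slow equations across the fast jump; and a slow-then-superslow return along $\mathcal{M}_0$ to the desert point.

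The value of $\theta$ is fixed by a solvability condition on the fast excursion. After the secondary rescaling of $V$ and $V_\xi$ needed to bring diffusion, quadratic reaction and damping into balance at leading order, closure of the singular pulse, that is, the requirement that $V_\xi$ return to zero together with $V$ at the touch-down, becomes a scalar algebraic condition on $\theta$. I expect this condition, obtainable either from a Melnikov-type integral of the form $\mathcal{C}\int V_\xi^{2}\,d\xi$ against the boundary contributions, or equivalently from matching the unstable manifold of the desert state to its one-dimensional stable manifold, to reduce to an explicit equation admitting a unique positive root $\theta_0$. Substituting back yields the leading-order expression $\mathcal{C} = (\mathcal{A}^2 \theta_0^2/\varepsilon)^{1/3} + \mathcal{O}(1)$ claimed in the statement. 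I anticipate that extracting this explicit $\theta$-dependence from the rescaled fast dynamics, rather than merely asserting its existence, will be the principal analytical obstacle.

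Persistence of the singular homoclinic for $0 < \varepsilon \ll 1$ then follows from standard GSPT tools: Fenichel's theorem provides a smooth invariant slow manifold $\mathcal{M}_0^{\varepsilon}$ together with its local stable and unstable foliations, and an exchange-lemma (or Lin's-method) argument tracks the three-dimensional unstable manifold of the desert state through the fast jump and back into a neighbourhood of $\mathcal{M}_0^{\varepsilon}$, where a transverse intersection with the one-dimensional stable manifold of the desert state is produced at a unique wave speed $\mathcal{O}(1)$-close to the singular value. A structural point worth emphasising is that the toxicity variable $S$ equips $\mathcal{M}_0$ with an additional strictly hyperbolic direction that preserves normal hyperbolicity uniformly up to the desert fixed point; this is precisely the feature that removes the need for the geometric blow-up employed in the autotoxicity-free analysis of \cite{Carter2018}, and thus substantiates the simplification advertised in the introduction.
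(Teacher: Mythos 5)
Your high-level strategy coincides with the paper's: pass to the travelling-wave ODE, scale $\mathcal{C}\sim\varepsilon^{-1/3}$, identify three spatial scales, build a singular homoclinic and establish persistence via Fenichel theory and the Exchange Lemma. However, two of the assertions on which your construction rests are incorrect, and they matter.

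First, your claim that (after the ``secondary rescaling of $V$ and $V_\xi$'' you rightly say is needed) $\mathcal{M}_0$ remains normally hyperbolic \emph{up to and including} the desert state is wrong, and with it your explanation for why blow-up is avoided. Once $V\sim\varepsilon^{-2/3}$ is scaled out to balance diffusion, quadratic reaction and damping, the mortality term $\mathcal{B}V$ becomes $\mathcal{O}(\delta)$ and drops from the layer problem; the linearisation normal to the 2D critical manifold $\mathcal{M}^{(3)}=\{v=q=0\}$ then has eigenvalues solving $\lambda^2+c^3\lambda-c^3\mathcal{H}s=0$, one of which vanishes at $s=0$. Thus $\mathcal{M}^{(3)}$ loses normal hyperbolicity exactly along $\mathcal{M}^{(1)}=\{s=0\}$, where the desert equilibrium sits. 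What actually spares the construction from blow-up is not uniform normal hyperbolicity but the fact that the toxicity component steers the return slow orbit on $\mathcal{M}^{(3),0}$ only $\mathcal{O}(\delta\log\delta)$-close to the non-hyperbolic origin, while the superslow $w$-dynamics on $\mathcal{M}^{(1)}$ take over in a neighbourhood of the desert point; the paper makes this precise through the auxiliary variable $w=(1+\delta)u+q+v+\mathcal{D}s$, which you do not introduce and which is what separates the slow from the superslow scale in the reduced flow.

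Second, your singular fast piece is a single loop off $\mathcal{M}_0$ and back, closed by a Melnikov-type or shooting condition at the desert state. In fact the fast transition is a \emph{concatenation of two heteroclinics} that travels through a third critical manifold $\mathcal{M}^{(2)}=\{v=w,\,q=0,\,s=0\}$ at $p_2(a)=(a,a,0,0)$. The leg from $p_3(a,s^\ast)$ to $p_2(a)$ lies in an invariant plane whose invariance forces the landing height $s^\ast=a/\mathcal{D}$ (Lemma~\ref{lemma:fc1}); the leg from $p_2(a)$ to $p_1(a)$ lies in $\{s=0\}$, coincides with the Carter--Doelman planar fast subsystem, and is what selects the unique $c^\ast=(a\theta_0)^{2/3}$ with $\theta_0\approx 0.8615$ (Lemma~\ref{lemma:fc2}). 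So the $\theta_0$-selection is not the ``principal analytical obstacle'' you anticipate, and it does not come from an integral condition of the form $\mathcal{C}\int V_\xi^2\,d\xi$; it is inherited directly from the toxicity-free problem. Without $\mathcal{M}^{(2)}$ and the two-heteroclinic structure, a single-jump picture cannot simultaneously fix the speed $c^\ast$ and the touch-down value $s^\ast$, and the persistence argument (which tracks the stable/unstable manifolds of $\mathcal{M}^{(2)}$) would have no anchor.
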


The constant $\theta_0$ coincides with the one provided in \cite[Theorem 1.1]{Carter2018}, i.e.~\mbox{$\theta_0 \approx 0.8615$}.

\newpage

\section{Existence of travelling pulses} \label{sec:Existence}
In this section, we apply GSPT to prove the existence of a constant speed travelling pulse solution to \eqref{eq:ndimmod}. 
Introducing a comoving frame variable $\xi = x - \mathcal{C} t$, system \eqref{eq:ndimmod} can be rephrased as
\begin{equation}
\begin{aligned}
\left( \mathcal{C} + \frac{1}{\varepsilon} \right) U_\xi + \mathcal{A} - U - U V^2 &= 0, \medskip \\
V_{\xi \xi} + \mathcal{C} V_\xi - \mathcal{B} V + U V^2 - \mathcal{H} S V &= 0, \medskip \\
\mathcal{C} \mathcal{D} S_\xi + \mathcal{B} V + \mathcal{H} S V - S &= 0.
\end{aligned}
\label{travel}
\end{equation}
System \eqref{travel} is equivalent to the first-order system
\begin{equation}
\begin{aligned}
U_\xi &= \frac{\varepsilon}{1 + \varepsilon \mathcal{C}} \left(U - \mathcal{A} + U V^2 \right), \medskip \\
V_\xi  &= Q, \medskip \\
Q_\xi &= \mathcal{B} V - U V^2 + \mathcal{H} S V - \mathcal{C} Q, \medskip \\
\mathcal{C} \mathcal{D} S_\xi &= S - \mathcal{B} V - \mathcal{H} S V. 
\end{aligned}
\label{eq:prescaling}
\end{equation}
Inspired by \cite{Carter2018} and by the different spatial scales visible in the pulse profile shown in Figure \ref{fig:num}, we introduce the following asymptotic rescaling:
\begin{equation}
\begin{array}{c c c}
V = c^{-1} \varepsilon^{-2/3} v, &
Q = \varepsilon^{-1} q, &
\xi = c^2 \varepsilon^{1/3} \tau, \medskip \\
\mathcal{C} = \varepsilon^{-1/3} c, &
S = c^{-1} \varepsilon^{-2/3} s, &
\delta = \varepsilon^{2/3} c.
\end{array}
\label{eq:rescaling}
\end{equation}
In these rescaled variables, system \eqref{eq:prescaling} becomes
\begin{equation}
\begin{aligned}
\dot{u} &= \frac{1}{1+\delta} \left( u v^2 + \delta^2 \left( u - \mathcal{A} \right) \right), \medskip \\
\dot{v} &= c^3 q, \medskip \\
\dot{q} &= \delta \mathcal{B} v - u v^2 + \mathcal{H} s v - c^3 q, \medskip \\
\mathcal{D} \dot{s} &= \delta s - \delta \mathcal{B} v - \mathcal{H} s v,
\end{aligned}
\label{eq:postscaling}
\end{equation}
where $\dot{} = \frac{\text{d}}{\text{d} \tau}$. Note that $\delta$ has taken over the role of $\varepsilon$ as the asymptotically small parameter. Single stripe patterns travelling uphill are given by orbits of \eqref{eq:postscaling} that are homoclinic to the equilibrium $\left(u, v, q, s\right)=\left( 
\mathcal{A}, 0, 0, 0 \right)$, which corresponds to the spatially homogeneous desert state $\left( U_\ast, V_\ast, S_\ast \right)$ in \eqref{eq:equil0}.\\
A priori, it is not immediately clear that system \eqref{eq:postscaling} admits \emph{three} (rather than two) asymptotic scales. Introducing the new variable
\begin{equation}\label{eq:definition_w_variable}
w = \left( 1 + \delta \right) u + q + v + \mathcal{D} s
\end{equation}
will aid the separation of scales in the upcoming analysis. Using $w$ in favour of $u$, system \eqref{eq:postscaling} is rephrased as
\begin{equation}
\begin{aligned}
\dot{w} &= \delta s + \frac{\delta^2}{1+\delta} \left( w - q - v - \mathcal{D} s - a \right), \medskip \\
\dot{v} &= c^3 q, \medskip \\
\dot{q} &= \delta \mathcal{B} v - \frac{1}{1 + \delta} \left( w - q - v - \mathcal{D} s \right) v^2 + \mathcal{H} s v - c^3 q, \medskip \\
\mathcal{D} \dot{s} &= \delta s - \delta \mathcal{B} v - \mathcal{H} s v,
\end{aligned}
\label{eq:postscalingw}
\end{equation}
where $a = \left( 1 + \delta \right) \mathcal{A}$. 
In the following, we will refer to system \eqref{eq:postscalingw} as the \emph{fast} system. Our goal is to construct a homoclinic orbit to the equilibrium $\left( w, v, q, s \right) = \left( a, 0, 0, 0 \right)$.\\
The fast-slow structure of system \eqref{eq:postscalingw} induced by the presence of the small parameter \mbox{$0 < \delta \ll 1$} allows us to construct the solution described in Theorem \ref{thm:main} as perturbations of singular orbits obtained by matching portions of critical manifolds (i.e.~manifolds of equilibria of system \eqref{eq:postscalingw} for $\delta=0$) with fast jumps along heteroclinic orbits arising in the corresponding layer problem. The reformulation of system \eqref{eq:postscalingw} with respect to the slow and the superslow scale is described in Section \ref{sec:redprob}, whereas the construction of singular homoclinic orbits is illustrated in Section \ref{sec:singorb}. The persistence of singular orbits for $0 < \delta \ll 1$ stated in Theorem \ref{thm:main} is proved in Section \ref{sec:persist}. Our theoretical findings are illustrated and extended by numerical results obtained via the continuation software AUTO \cite{Doedel1981} in Section \ref{sec:numcont}.


\subsection{Critical manifolds and reduced dynamics} 
\label{sec:redprob}
We take $\delta=0$ in the fast system \eqref{eq:postscalingw} to obtain the layer problem
\begin{subequations} \label{eq:laypb}
\begin{align}
\dot{w} &= 0, \\
\dot{v} &= c^3 q, \label{laypbV} \\
\dot{q} &= - \left( w - q - v - \mathcal{D} s \right) v^2 + \mathcal{H} s v - c^3 q, \\
\mathcal{D} \dot{s} &= - \mathcal{H} s v. \label{laypbS}
\end{align}
\end{subequations}
The equilibria of the layer problem \eqref{eq:laypb} determine the critical manifolds
\begin{subequations}\label{eq:critmanifolds}
\begin{align}
\mathcal{M}^{(1)} &= \left\{\,v=0,\, q=0,\, s=0\,\right\},\\
\mathcal{M}^{(2)} &= \left\{\,v=w,\, q=0,\, s=0\,\right\},\\
\mathcal{M}^{(3)} &= \left\{\,v=0,\, q=0\,\right\}.
\end{align}
\end{subequations}
Geometrically, the one-dimensional critical manifolds $\mathcal{M}^{(1)}$ and $\mathcal{M}^{(2)}$ are lines, and $\mathcal{M}^{(1)}$ is contained in the two-dimensional hyperplane $\mathcal{M}^{(3)}$. For future reference, we denote the equilibria of the layer problem \eqref{eq:laypb} as
\begin{equation}
p_1(w) = (w, 0, 0, 0),
\hspace{0.3cm}
p_2(w) = (w, w, 0, 0),
\hspace{0.3cm}
p_3(w,s) = (w, 0, 0, s),
\label{eq:equilibria}
\end{equation}
allowing us to parametrise the critical manifolds as $\mathcal{M}^{(1)} = \bigcup_{w \in \mathbb{R}} p_1(w)$, $\mathcal{M}^{(2)} = \bigcup_{w \in \mathbb{R}} p_2(w)$ and $\mathcal{M}^{(3)} = \bigcup_{(w,s) \in \mathbb{R}^2} p_3(w,s)$. We note that $\mathcal{M}^{(1)}$ transversally intersects $\mathcal{M}^{(2)}$ (and therefore $\mathcal{M}^{(3)}$) at the origin. The critical manifold $\mathcal{M}^{(3)}$ is normally hyperbolic, since the equilibrium $p_3(w,s)$ is a saddle in the normal $(v,q)$-directions. The critical manifold $\mathcal{M}^{(2)}$ is normally hyperbolic away from the origin, with eigenvalues $-c^3$, $-\frac{\mathcal{H} w}{\mathcal{D}}$ and $w^2$ in the normal directions. Since $\mathcal{M}^{(1)}$ is fully contained in $\mathcal{M}^{(3)}$, its persistence is related to the dynamics on $\mathcal{M}^{(3)}$, which we will turn to next.\\

We observe that the hyperplane $\mathcal{M}^{(3)}$ \eqref{eq:critmanifolds} is invariant under the flow of \eqref{eq:postscalingw}. In addition, $\mathcal{M}^{(1)} \subset \mathcal{M}^{(3)}$ is itself also invariant under the flow of \eqref{eq:postscalingw}. In contrast to the other two critical manifolds, the critical manifold $\mathcal{M}^{(2)}$ is not invariant under the flow of the full system \eqref{eq:postscalingw}. The flow on $\mathcal{M}^{(2)}$ is to leading order given by
\begin{subequations}
\begin{align}
\dot{q} &= \delta \mathcal{B} w , \medskip \\
\mathcal{D} \dot{s} &= - \delta \mathcal{B} w.
\end{align}
\label{eq:slowM2}
\end{subequations}
Since $s=0$ on $\mathcal{M}^{(2)}$, the reduced dynamics here would be biologically unfeasible ($s$ decreases and hence becomes negative). The critical manifold $\mathcal{M}^{(2)}$ is therefore not going to play a role in the construction of the travelling pulse solutions (see Figure \ref{fig:critman}). Hence, we focus our attention on the dynamics on $\mathcal{M}^{(3)}$, which contain the dynamics on $\mathcal{M}^{(1)}$.\\

To study the reduced flow on the critical manifold $\mathcal{M}^{(3)}$, we introduce $\sigma = \delta \tau$. In this slow variable, system \eqref{eq:postscalingw} reads
\begin{subequations}
\begin{align}
w_\sigma &= s + \frac{\delta}{1+\delta} \left( w - q - v - \mathcal{D} s - a \right), \medskip \\
\delta v_\sigma &= c^3 q, \medskip \\
\delta q_\sigma &= \delta \mathcal{B} v - \frac{1}{1 + \delta} \left( w - q - v - \mathcal{D} s \right) v^2 + \mathcal{H} s v - c^3 q, \medskip \\
\delta \mathcal{D} s_\sigma &= \delta s - \delta \mathcal{B} v - \mathcal{H} s v.
\label{eq:slow}
\end{align}
\end{subequations}
The flow on $\mathcal{M}^{(3)}$ is therefore given by
\begin{subequations}
\begin{align}
w_\sigma &= s + \frac{\delta}{1+\delta} \left( w - \mathcal{D} s - a \right), \medskip \\
\mathcal{D} s_\sigma &= s.
\end{align}
\label{eq:slowflow_M3}
\end{subequations}
System \eqref{eq:slowflow_M3} is itself a (linear) slow-fast system, which can be brought in Fenichel normal form by observing that
\begin{equation}\label{eq:w-Ds_flow}
 \frac{\text{d}}{\text{d} \sigma}\left(w - \mathcal{D} s\right) = \frac{\delta}{1+\delta}\left((w-\mathcal{D} s)-a\right).
\end{equation}
First, we observe that the set $\left\{\,s=0\,\right\}$, representing $\mathcal{M}^{(1)}$, is invariant under the flow of  \eqref{eq:slowflow_M3}. At the same time, $\left\{\,s=0\,\right\}$ is the critical manifold of the slow-fast system \eqref{eq:slowflow_M3}; the flow on $\mathcal{M}^{(1)}$ is given by
\begin{equation}\label{eq:superslowM1}
 w_\zeta = \frac{1}{1+\delta} \left( w - a \right)
\end{equation}
in terms of the superslow variable $\zeta := \delta \sigma = \delta^2 \tau$. The flow \eqref{eq:superslowM1} has a single unstable equilibrium at $w=a$, which corresponds to the global desert state.\\
Second, the linear coordinate transform \eqref{eq:w-Ds_flow} inspires us to foliate the critical manifold $\mathcal{M}^{(3)}$ by one-dimensional manifolds 
\begin{equation}\label{eq:M3_w0}
\mathcal{M}^{(3),w_0}:= \left\{ p_3\left(w, \frac{w-w_0}{\mathcal{D}}\right) \, : \, w_0 \in \mathbb{R}_0^+ \right\},
\end{equation}
cf. \eqref{eq:equilibria}. Note that $\mathcal{M}^{(3),w_0}$ are precisely the (unstable) Fenichel fibres for the flow \eqref{eq:slowflow_M3}; the flow on $\mathcal{M}^{(3),w_0}$ is independent of $w_0$, and given by $\mathcal{D} s_\sigma = s$.
The critical manifolds and the reduced flow on these critical manifolds is shown in Figure \ref{fig:critman}.

\begin{figure}[H]
\begin{center}
\begin{tikzpicture}
\draw[thick,->] (0,0) -- (7,0);
\node [below right] at (7,0) {$v$};
\draw[thick,->] (0,0) -- (3.6,1.8);
\node [right] at (3.6,1.8) {$s$};
\draw[thick,->] (0,0) -- (0,6);
\node [left] at (-0.1,6) {$w$};

\draw[very thick,blue] (0,0) -- (6,6);
\begin{scope}[very thick,royalpurple,decoration={
    markings,
    mark=at position 0.5 with {\arrow{{Stealth}}}}
    ] 
    \draw[postaction={decorate}] (0,5.8) -- (0,0);
\end{scope}
\begin{scope}[very thick,royalpurple,decoration={
    markings,
    mark=at position 0.87 with {\arrow{{Stealth}}}}
    ] 
    \draw[postaction={decorate}] (0,0) -- (0,5.8);
\end{scope}
\draw[fill,royalpurple] (0,4) circle [radius=2pt];
\node[left] at (0,4) {$a$};
\node[left] at (0,2) {\textcolor{royalpurple}{$\mathcal{M}^{(1)}$}};
\node[right] at (6.3,6.1) {\textcolor{blue}{$\mathcal{M}^{(2)}$}};
\node[below left] at (1.2,5.9) {\textcolor{blue}{$\mathcal{M}^{(3)}$}};
\draw[royalpurple] (-.1,1.6) -- (-.4,1.7);
\draw[blue] (5.9,5.8) -- (6.3,5.9);

\fill[nearly transparent,blue] (0,0) -- (0,5.6) -- (3.4,7.4) -- (3.4,1.7);
\begin{scope}[very thick,nearly transparent,blue,decoration={
    markings,
    mark=at position 0.34 with {\arrow{{Stealth}{Stealth}}}}
    ] 
    \draw[postaction={decorate}] (0,3.9) -- (1.75,6.52);
\end{scope}
\begin{scope}[very thick,nearly transparent,blue,decoration={
    markings,
    mark=at position 0.37 with {\arrow{{Stealth}{Stealth}}}}
    ] 
    \draw[postaction={decorate}] (0,2.5) -- (3.2,7.28);
\end{scope}    
\begin{scope}[very thick,nearly transparent,blue,decoration={
    markings,
    mark=at position 0.53 with {\arrow{{Stealth}{Stealth}}}}
    ] 
    \draw[postaction={decorate}] (0,1.1) -- (3.38,6.17);
\end{scope}
\begin{scope}[very thick,nearly transparent,blue,decoration={
    markings,
    mark=at position 0.67 with {\arrow{{Stealth}{Stealth}}}}
    ] 
    \draw[postaction={decorate}] (0.3,0.15) -- (3.38,4.8);
\end{scope}
\begin{scope}[very thick,nearly transparent,blue,decoration={
    markings,
    mark=at position 0.73 with {\arrow{{Stealth}{Stealth}}}}
    ] 
    \draw[postaction={decorate}] (1.7,0.85) -- (3.38,3.37);
\end{scope}
\begin{scope}[very thick,blue,decoration={
    markings,
    mark=at position 1 with {\arrow{{Stealth}{Stealth}}}}
    ] 
    \draw[postaction={decorate}] (1,1) -- (0.25,5/8);
    \draw[postaction={decorate}] (3.3,3.3) -- (2.55,2.925);
    \draw[postaction={decorate}] (5.6,5.6) -- (4.85,5.225);
\end{scope}
\end{tikzpicture}
\end{center}
\caption{Slow (blue) and superslow (purple) dynamics on the critical manifolds $\mathcal{M}^{(i)}$, \mbox{$i=1, 2, 3$} \eqref{eq:critmanifolds} $(v,s,w)$-space for $q=0$.}
\label{fig:critman}
\end{figure}
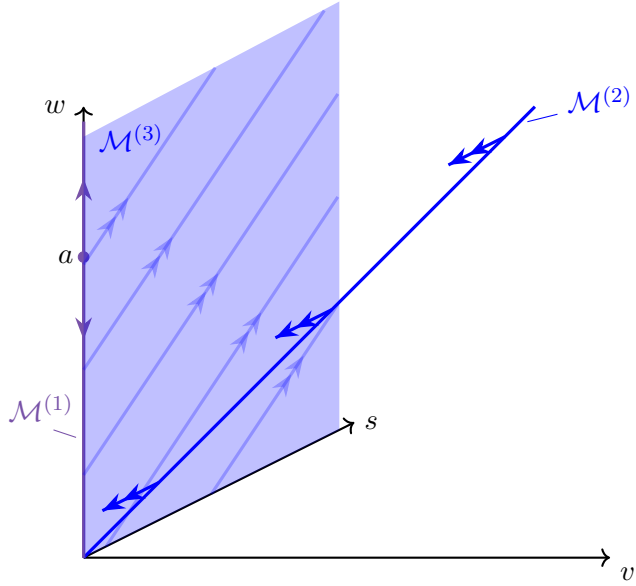

\subsection{Layer problem}
We recall that our sought-after travelling pulses correspond to solutions homoclinic to the equilibrium $\left( w, v, q, s \right) = \left( a, 0, 0, 0 \right)$. From the analysis carried out in Section \ref{sec:redprob}, we see that the equilibrium $\left( a, 0, 0, 0 \right)$ lies on the invariant manifold $\mathcal{M}^{(3)}$; however, the dynamics on $\mathcal{M}^{(3)}$ are linear \eqref{eq:slowflow_M3}. Therefore, $\mathcal{M}^{(3)}$ cannot contain a homoclinic orbit, and we conclude that any orbit homoclinic to $\left( a, 0, 0, 0 \right)$ must necessarily contain fast segments, indicating an excursion away from $\mathcal{M}^{(3)}$.\\
In the singular limit, the flow on $\mathcal{M}^{(3)}$ \eqref{eq:slowflow_M3} is decomposed into a first superslow segment on $\mathcal{M}^{(1)}$ from $p_1(a)=(a,0,0,0)$ to a point $(w^\ast, 0, 0, 0)$, followed by a slow transition on $\mathcal{M}^{(3),w^\ast}$ from $(w^\ast, 0, 0, 0)$ to $p_3(a,s^\ast)=(a,0,0,s^\ast)$ for particular choices of $w^\ast$ and $s^\ast$, see Figure \ref{fig:slow_super_slow}. The fast transition closing the loop would be a heteroclinic orbit in the layer problem \eqref{eq:laypb} from the equilibrium $p_3(a,s^\ast)$ to $p_1(a)$.\\

Our goal therefore is to determine the value of the constants $w^\ast$ and $s^\ast$ by investigating the layer problem, in particular by ensuring that there exists a unique fast connection from $p_3(a,s^\ast)$ to $p_1(a)$. A priori, obtaining this heteroclinic orbit in the three-dimensional layer problem \eqref{eq:laypb} would be prohibitive; however, we will exploit the structure of the layer problem to show that the target heteroclinic lies close to the union of two heteroclinic connections to and from $p_2(a)$, see Figure \ref{fig:fast_fig}. The existence of the pair of heteroclinic connections to and from $p_2(a)$ is determined in Lemma \ref{lemma:fc2} and Lemma \ref{lemma:fc1}.


\begin{figure}[b!]
\begin{center}
\begin{tikzpicture}
\draw[thick,->] (0,0) -- (7,0);
\node [below right] at (7,0) {$v$};
\draw[thick,->] (0,0) -- (3.6,1.8);
\node [right] at (3.6,1.8) {$s$};
\draw[thick,->] (0,0) -- (0,6);
\node [left] at (-0.1,6) {$w$};

\draw[thick,dotted,gray] (0,4) -- (3.6,5.8);
\begin{scope}[very thick,royalpurple,decoration={
    markings,
    mark=at position 0.5 with {\arrow{{Stealth}}}}
    ] 
    \draw[postaction={decorate}] (0,4) -- (0,2);
\end{scope}
\draw[fill,royalpurple] (0,2) circle [radius=2pt];
\node[left] at (0,2) {$w^\ast$};
\draw[fill,royalpurple] (0,4) circle [radius=2pt];
\node[left] at (0,4) {$a$};
\node[right] at (0,3.9) {\textcolor{royalpurple}{$p_1(a)$}};
\node[below] at (2,1) {$s^\ast$};
\draw[fill,blue] (2,5) circle [radius=2pt];
\node[right] at (2,4.9) {\textcolor{blue}{$p_3(a,s^\ast)$}};
\draw[thick,dotted,gray] (2,1) -- (2,5);
\begin{scope}[very thick,blue,decoration={
    markings,
    mark=at position 0.5 with {\arrow{{Stealth}{Stealth}}}}
    ] 
    \draw[postaction={decorate}] (0,2) -- (2,5);
\end{scope}
\end{tikzpicture}
\end{center}
\caption{Schematic representation of the slow (blue) and superslow (purple) dynamics of the travelling pulse. The dynamics take place on $\mathcal{M}^{(3)}$ \eqref{eq:slowflow_M3}: the superslow segment is contained in $\mathcal{M}^{(1)}$ \eqref{eq:superslowM1}, and the slow segment is contained in $\mathcal{M}^{(3),w_0}$ \eqref{eq:M3_w0}. See also Figure \ref{fig:critman}.}
\label{fig:slow_super_slow}
\end{figure}
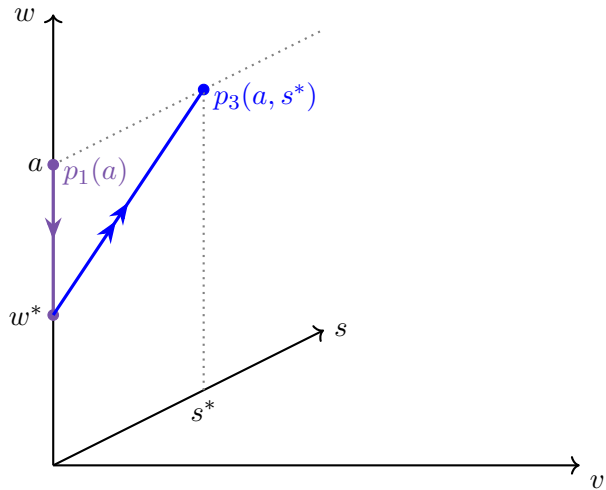



\begin{figure}[h!]
\begin{center}
\begin{tikzpicture}
\draw[thick,->] (0,0) -- (7,-2);
\node [below right] at (7,-2) {$v$};
\draw[thick,->] (0,0) -- (3.6,1.8);
\node [right] at (3.6,1.8) {$s$};
\draw[thick,->] (0,-3) -- (0,4);
\node [left] at (-0.1,4) {$q$};

\fill[nearly transparent,pastelgreen] (0,-1) -- (5,-2.42857) -- (5,-0.42857) -- (0,1);
\fill[nearly transparent,pastelgreen] (4.15,1) -- (0.55,1.5) -- (3.8,-1.3) -- (7.4,-1.8);

\begin{scope}[very thick,blue,decoration={
    markings,
    mark=at position 0.7 with {\arrow{{Stealth}{Stealth}}}}
    ] 
    \draw[postaction={decorate}] (0,0) -- (2.5,1.25);
\end{scope}
\draw[fill,royalpurple] (0,0) circle [radius=2pt];
\node[below left,royalpurple] at (0,0.1) {$p_1(a)$};
\draw[fill,blue] (2.5,1.25) circle [radius=2pt];
\node[left,blue] at (3,1.6) {$p_3(a,s^\ast)$};
\draw[fill,blue] (5,-1.42857) circle [radius=2pt];
\node[above right,darkpastelgreen] at (4.9,-2.22857) {$p_2(a)$};


\begin{scope}[very thick,darkpastelgreen,decoration={
    markings,
    mark=at position 0.6 with {\arrow{{Stealth}{Stealth}{Stealth}}}}
    ] 
    \draw[postaction={decorate}] 
    (5,-1.42857) .. controls (3,-1.5) and (1,-1.5) .. (0,0);
    \draw[postaction={decorate}] (2.5,1.25) .. controls  (2.3,0.8) and (2.5,0.2) .. (5,-1.42857);
\end{scope}
\end{tikzpicture}
\end{center}
\caption{Schematic representation of the fast dynamics associated to a travelling pulse solution to \eqref{eq:postscalingw}, in $(v,q,s)$-space for fixed $w=a$. The existence of the heteroclinic connection from $p_3(a,s^\ast)$ to $p_2(a)$ follows from Lemma \ref{lemma:fc1}; the existence of the heteroclinic connection from $p_2(a)$ to $p_1(a)$ follows from Lemma \ref{lemma:fc2}. The invariant planes $\Pi$ \eqref{eq:plane_Pi} and $\left\{\,s=0\,\right\}$ are indicated in green.}
\label{fig:fast_fig}
\end{figure}
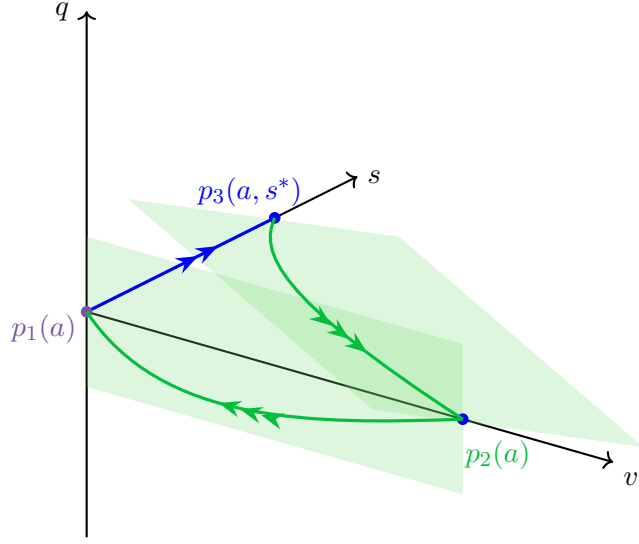

\begin{lemma} \label{lemma:fc2}
There exists a unique $c=c^\ast>0$ for which system \eqref{eq:laypb} admits a heteroclinic orbit from $p_2(a) =(a,a,0,0)$ to $p_1(a)=(a,0,0,0)$. This heteroclinic orbit lies in the hyperplane $\left\{ (w,v,q,s) \, : \, w=a, \ s=0 \right\}$. In particular,
\begin{equation}\label{eq:c_ast}
c^\ast=(a \, \theta_0)^{2/3}
\end{equation}
with $\theta_0 \approx 0.8615$.
\end{lemma}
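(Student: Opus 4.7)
The plan is to reduce the four-dimensional layer problem \eqref{eq:laypb} to a two-dimensional planar system by exploiting two invariances, and then to identify the reduced problem with one already solved in \cite{Carter2018}.

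First, I would observe that the hyperplane $\Pi = \{(w,v,q,s)\,:\,w=a,\; s=0\}$ is invariant under the flow of \eqref{eq:laypb}: trivially $\dot w = 0$, while $\mathcal{D}\dot s = -\mathcal{H}sv$ is linear in $s$, so $\{s=0\}$ is invariant as well. Both $p_1(a)$ and $p_2(a)$ lie in $\Pi$. Moreover, any trajectory emanating from $p_2(a)$ must satisfy $w\equiv a$, and the linearity of the $s$-equation forces $s\equiv 0$ whenever $s(-\infty)=0$. Consequently \emph{every} heteroclinic from $p_2(a)$ to $p_1(a)$ must lie in $\Pi$. This already establishes the second claim of the lemma and reduces the search for heteroclinics to a planar problem.

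Next, restricting \eqref{eq:laypb} to $\Pi$ yields the planar system
\begin{align*}
\dot v &= c^3 q, \\
\dot q &= -(a-v)v^2 + qv^2 - c^3 q,
\end{align*}
which is precisely the fast reduced Klausmeier system analysed in \cite{Carter2018} in the absence of autotoxicity ($\mathcal{H}=0$). It possesses a hyperbolic saddle at $(v,q)=(a,0)$ and a non-hyperbolic equilibrium at $(v,q)=(0,0)$ (with one zero and one negative eigenvalue). The existence of a unique wavespeed $c=c^\ast>0$ admitting a heteroclinic orbit between these two points, together with the explicit formula $c^\ast=(a\theta_0)^{2/3}$ and $\theta_0\approx 0.8615$, is exactly the content of \cite[Theorem 1.1]{Carter2018}, and a direct invocation of that result completes the proof.

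The main obstacle, were one to reprove everything from scratch, would be the analysis of the planar heteroclinic itself: because the target equilibrium at the origin is non-hyperbolic, its stable manifold is not seen by standard linearisation, and in \cite{Carter2018} a geometric blow-up of the origin is performed to resolve the degeneracy and to derive the transcendental equation (involving Airy-type functions) whose unique positive root determines $\theta_0$. Since this analysis is already carried out in \cite{Carter2018} and our reduced problem coincides with theirs term-by-term, the only remaining task is to verify the identification of coefficients, which is immediate because the autotoxicity contributions $\mathcal{H}sv$ and the $\mathcal{D}s$-coupling vanish identically on $\Pi$.
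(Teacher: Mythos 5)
Your proposal is correct and follows essentially the same approach as the paper: restrict the layer problem to the invariant hyperplane $\{w=a,\,s=0\}$, observe that the resulting planar system coincides with the one studied in \cite{Carter2018}, and invoke that result to obtain existence and uniqueness of $c^\ast = (a\theta_0)^{2/3}$. You supply one small extra step that the paper leaves implicit — using the linearity of $\mathcal{D}\dot s = -\mathcal{H}sv$ together with $v(-\infty)=a>0$ to show that \emph{every} heteroclinic from $p_2(a)$ must have $s\equiv 0$ — which properly ties the uniqueness of $c^\ast$ in the planar problem to uniqueness in the full four-dimensional layer problem.
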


\begin{proof}
The hyperplane $\left\{\,s=0\right\}$ is invariant under the flow of system \eqref{eq:laypb}. On this hyperplane, the layer problem reduces to
\begin{equation} \label{eq:laypbs0}
\begin{aligned}
\dot{v} &= c^3 q, \\
\dot{q} &= - \left( w - q - v \right) v^2 - c^3 q, 
\end{aligned}
\end{equation}
which coincides with the layer problem studied in \cite[Proposition 2.2]{Carter2018}. The existence of a unique value of $c=c^\ast=(a \, \theta_0)^{2/3}$ with $\theta_0 \approx 0.8615$ such that there is a heteroclinic orbit connecting the two equilibria $p_1(a)$ and $p_2(a)$
hence directly follows.
\end{proof}

\begin{lemma} \label{lemma:fc1}
There exists a unique $s= s^\ast = \frac{a}{\mathcal{D}}$ for which system \eqref{eq:laypb} admits a heteroclinic orbit from $p_3(a,s^\ast) = (a,0,0,s^\ast)$ to $p_2(a) = (a,a,0,0)$. This heteroclinic orbit lies in the hyperplane $\left\{ (w,v,q,s) \, : \, w=a,\,w-v-q-\mathcal{D} s = 0\, \right\}$.
\end{lemma}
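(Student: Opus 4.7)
The plan is to exploit the observation that the hyperplane $\Pi = \{w = a,\, F = 0\}$ with $F := w - v - q - \mathcal{D} s$ is invariant under the flow of \eqref{eq:laypb}. A direct calculation using \eqref{eq:laypb} yields $\dot{F} = v^2 F$, which immediately shows that $\Pi$ is invariant. Moreover, since $F(\tau) = F(0) \exp\!\left(\int_0^\tau v(\sigma)^2 \, \text{d}\sigma\right)$, the quantity $|F|$ is nondecreasing along any orbit, so any orbit $\omega$-limiting to $p_2(a) \in \Pi$ must have $F \equiv 0$, i.e.~lie entirely in $\Pi$. Evaluating $F$ at $p_3(a, s^\ast)$ yields $F = a - \mathcal{D} s^\ast$, which must vanish, forcing $s^\ast = a/\mathcal{D}$. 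This gives the claimed uniqueness.

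For existence, I would restrict \eqref{eq:laypb} to $\Pi$ with $w = a$, parametrising by $(v, s)$ with $q = a - v - \mathcal{D} s$. The reduced planar system reads
\begin{equation*}
\dot v = c^3\left(a - v - \mathcal{D} s\right), \qquad \mathcal{D}\,\dot s = -\mathcal{H} s v.
\end{equation*}
Both equilibria $p_3(a, a/\mathcal{D})$ and $p_2(a)$ lie in the closed triangle $T = \{v \ge 0,\, s \ge 0,\, v + \mathcal{D} s \le a\}$. I would verify that $T$ is positively invariant: on $\{v = 0\}$ one has $\dot v = c^3(a - \mathcal{D} s) \ge 0$; on $\{s = 0\}$ one has $\dot s = 0$; and on the hypotenuse $\{v + \mathcal{D} s = a\}$ one computes $\tfrac{\text{d}}{\text{d}\tau}(v + \mathcal{D} s) = -\mathcal{H} s v \le 0$.

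Linearisation then shows that $p_3$ is a hyperbolic saddle whose one-dimensional unstable manifold enters $T$, while $p_2$ is a stable node with eigenvalues $-c^3$ and $-\mathcal{H} a/\mathcal{D}$. The divergence of the planar vector field is $-c^3 - (\mathcal{H}/\mathcal{D}) v$, which is strictly negative on $T$, so Bendixson--Dulac excludes periodic orbits. Combining positive invariance of $T$, absence of limit cycles, and the Poincaré--Bendixson theorem, the unstable manifold of $p_3$ must terminate at the only remaining equilibrium $p_2(a)$. This yields the desired heteroclinic, which is unique both within $\Pi$ (since $\dim W^u(p_3) = 1$) and across all values of $s^\ast$ (by the first paragraph).

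The main obstacle is spotting the invariant $F$ and the identity $\dot F = v^2 F$: this single algebraic fact simultaneously furnishes the invariant plane $\Pi$, pins down the unique value $s^\ast = a/\mathcal{D}$, and collapses the three-dimensional layer problem to a two-dimensional phase plane in which Poincaré--Bendixson applies cleanly. Everything else is routine planar dynamics.
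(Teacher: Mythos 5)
Your proof is correct and takes a genuinely different — and arguably cleaner — route than the paper's. The paper locates the invariant plane $\Pi$ by first computing the eigenstructure of $p_3(a,\bar s)$, changing to centre-manifold coordinates \eqref{eq:laypbwc}, and observing that the plane $\{w_c=0\}$ is invariant precisely when $\bar s = a/\mathcal D$. It then identifies $\Pi$ with the (global) stable manifold $\mathcal W^s(p_2(a))$ by an eigenspace/uniqueness argument, and concludes that the one-dimensional unstable manifold of $p_3$ inside $\Pi$ must converge to $p_2(a)$ because it lies in $\mathcal W^s(p_2(a))$. Your approach instead starts from the single algebraic identity $\dot F = v^2 F$ with $F = w - v - q - \mathcal D s$, which in one stroke exhibits the invariance of $\Pi$ and, via the monotonicity of $|F|$ along orbits, pins down $s^\ast = a/\mathcal D$ without any coordinate change or eigenvector computation. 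You then replace the paper's global identification $\Pi = \mathcal W^s(p_2(a))$ (a nontrivial global statement that the paper only justifies locally) with a self-contained Poincar\'e--Bendixson argument on a positively invariant triangle together with the Dulac criterion to rule out cycles and graphics. Your reduced planar system on $\Pi$ in the $(v,s)$-parametrisation, $\dot v = c^3(a-v-\mathcal D s)$ and $\mathcal D\dot s = -\mathcal H s v$, is equivalent to the paper's \eqref{eq:dynamics_P} in the $(v,q)$-parametrisation. What your route buys is elementarity and transparency: the invariant $F$ and the Dulac/Poincar\'e--Bendixson closing argument are more robust than tracking eigenvectors and appealing to manifold coincidence, and your treatment of existence is slightly more airtight. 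What the paper's route buys is consistency with the rest of its GSPT narrative (the same eigenvector data \eqref{eq:eigenvalues_p2}, \eqref{eq:eigenvalues_p3} reappear in the persistence argument and in the discussion of the degenerate case $\mathcal H = 0$).
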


\begin{proof}
We fix $w=a$, and observe that the line $\bigcup_{s} p_3(a,s) = \mathcal{M}^{(3)} \cap \left\{ w = a \right\}$ is invariant under the flow of the layer problem \eqref{eq:laypb}. For fixed $s=\bar{s}$, $p_3(a,\bar{s})$ is an equilibrium of the layer problem \eqref{eq:laypb} with eigenvalues
\begin{equation}\label{eq:eigenvalues_p3}
\lambda^c_3(a,\bar{s}) = 0\quad\text{and}\quad \lambda^{s,u}_3(a,\bar{s}) = -\frac{1}{2}c^{3/2}\left(-c^{3/2} \pm \sqrt{c^3 + 4 \mathcal{H} \bar{s}}\right).    
\end{equation}
The corresponding centre, stable, and unstable eigenvectors are given by
\begin{equation} \label{eq:evprop}
   \bm{\eta}^c = (0,0,1), \qquad \bm{\eta}^{s,u} = \left(-\lambda_{s,u},-\mathcal{H} \bar{s} + \lambda_{s,u},\frac{\mathcal{H} \bar{s}}{\mathcal{D}}\right).
\end{equation}
Performing a coordinate change $(v,q,s) = (0,0,\bar{s}) + w_c \bm{\eta}^c + w_s \bm{\eta}^s + w_u \bm{\eta}^u$, we obtain for the centre dynamics
\begin{equation}  \label{eq:laypbwc}
\dot{w}_c = \frac{(\mathcal{D} (\bar{s}+w_c)-a) (\lambda^s_3(a,\bar{s}) w_s+\lambda^u_3(a,\bar{s}) w_u)^2}{\mathcal{D}}, 
\end{equation}
from which we see that the plane $\left\{w_c = 0\right\}$ is globally invariant if and only if \mbox{$\bar{s} = \frac{a}{\mathcal{D}} =: s^\ast$}. Reverting to the original coordinates, this plane transforms to 
\begin{equation}\label{eq:plane_Pi}
\Pi = \left\{a - v- q - \mathcal{D}s = 0\right\};    
\end{equation}
note that this is $\mathcal{O}(\delta)$ close to the hyperplane $\left\{u=0, w=a\right\}$, cf.~\eqref{eq:definition_w_variable}.\\
We observe that both $p_2(a)$, $p_3(a,s^\ast) \in \Pi$. 
Since the eigenvalues $\lambda^{(i)}_2(a)$ of the equilibrium $p_2(a)$ are given by 
\begin{equation}\label{eq:eigenvalues_p2}
\lambda^{(1)}_2(a) = -c^3,\quad \lambda^{(2)}_2(a)= -\frac{\mathcal{H} a}{\mathcal{D}},\quad \lambda^{(3)}_2(a) = a^2,
\end{equation}
the stable manifold of $p_2(a)$ is two-dimensional. The stable eigenspace of $p_2(a)$ is spanned by $\left(-1,1,0\right)$ and $\left(-c^3 \mathcal{D}^2, a \mathcal{D} \mathcal{H}, c^3 \mathcal{D} - a \mathcal{H}\right)$. Inspection of these eigenvectors reveals that the stable eigenspace of $p_2(a)$ is contained in the plane $\Pi$; hence, the two planes coincide. The stable manifold $\mathcal{W}^s\left(p_2(a)\right)$ is therefore tangent to $\Pi$ at $p_2(a)$. However, since both $\mathcal{W}^s\left(p_2(a)\right)$ and $\Pi$ are two-dimensional manifolds that are invariant under the flow of \eqref{eq:laypb} and $p_2(a)$ is hyperbolic, it follows that $\mathcal{W}^s\left(p_2(a)\right)$ and $\Pi$ must coincide, as the stable manifold is (locally) unique.\\
The dynamics of \eqref{eq:laypb} on $\Pi = \mathcal{W}^s\left(p_2(a)\right)$ are given by
\begin{equation} \label{eq:dynamics_P}
\begin{aligned}
\dot{v} &= c^3 q, \\
\dot{q} &= \frac{\mathcal{H}}{\mathcal{D}}\left( a - q - v \right) v - c^3 q. 
\end{aligned}
\end{equation}
The equilibrium $(v,q) = (0,0)$ of \eqref{eq:dynamics_P}, representing $p_3(a,s^\ast)$, is of saddle type. Its one-dimensional unstable manifold lies per construction inside the stable manifold of $p_2(a)$, represented by the equilibrium $(a,0)$ of \eqref{eq:dynamics_P}, which is a sink. It follows that a heteroclinic orbit from $(0,0)$ to $(a,0)$ in \eqref{eq:dynamics_P} is guaranteed to exist.
\end{proof}




\subsection{Singular orbit} \label{sec:singorb}

We can combine the results obtained for the layer problem \eqref{eq:laypb} from Lemma \ref{lemma:fc2} and Lemma \ref{lemma:fc1} with the analysis of the reduced problems presented in Section \ref{sec:redprob} to obtain a singular structure for the homoclinic orbit to the desert state $(a,0,0,0)$. 
In the following, for any $\underline{w}$, $\overline{w}$ such that $\underline{w}<\overline{w}$ we refer to subsets of $\mathcal{M}^{(1)}$ and $\mathcal{M}^{(3),w_0}$ using the notation
\begin{equation}
\begin{aligned}
 \mathcal{M}^{(i)} (\underline{w},\overline{w}) &= \left\{ p_i(w) \, : \, w \in [\underline{w},\overline{w}] \right\}, \qquad i=1,2, \\
 \mathcal{M}^{(3),w_0} (\underline{w},\overline{w}) &= \left\{ p_3(w,s(w)) \, : \, w \in [\underline{w},\overline{w}] \right\}.
\label{eq:M1M3w}
\end{aligned}
\end{equation}

\begin{proposition}\label{thm:singsol}
 For any fixed value of $a$, $\mathcal{B}$, $\mathcal{D}$, $\mathcal{H}>0$ and given $c=c^\ast=(a \, \theta_0)^{2/3}$, there exists a unique singular skeleton orbit $\Phi_0$ homoclinic to $p_1(a)=(a,0,0,0)$ corresponding to a singular solution to system \eqref{eq:postscalingw}, composed of segments
of orbits of the layer problem \eqref{eq:laypb} and the reduced problems \eqref{eq:slowflow_M3} and \eqref{eq:superslowM1}. In particular, the singular solution $\Phi_0$ is given by the concatenation
\begin{equation} \label{eq:singhom}
  \Phi_0=\mathcal{M}^{(1)}(0, a) \cup \mathcal{M}^{(3),0}(0, a) \cup \phi_{f,1} \cup \phi_{f,2},
\end{equation}
where $\phi_{f,1}$ and $\phi_{f,2}$ are the heteroclinic orbits determined in Lemma \ref{lemma:fc2} and Lemma \ref{lemma:fc1}, respectively.
\end{proposition}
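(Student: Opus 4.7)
The plan is to build $\Phi_0$ as a concatenation of the four listed segments, verifying pairwise that consecutive endpoints match and that each piece is uniquely determined once $c = c^\ast$ is prescribed. I would start with the two fast legs. Since $\dot{w}=0$ in the layer problem \eqref{eq:laypb}, both $\phi_{f,1}$ and $\phi_{f,2}$ live in the invariant hyperplane $\{w=a\}$. Lemma \ref{lemma:fc2} supplies the unique heteroclinic $\phi_{f,2}$ from $p_2(a)$ to $p_1(a)$, and simultaneously fixes $c=c^\ast$; Lemma \ref{lemma:fc1} supplies $\phi_{f,1}$ from $p_3(a,s^\ast)$ to $p_2(a)$, identifying $s^\ast = a/\mathcal{D}$. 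Their common endpoint at $p_2(a)$ makes $\phi_{f,1} \cup \phi_{f,2}$ a well-defined path from $p_3(a,s^\ast)$ back to the desert state $p_1(a)$, all at $w=a$.

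Next I would reconnect $p_1(a)$ to $p_3(a,s^\ast)$ through the reduced dynamics. The touchdown at $p_1(a) \in \mathcal{M}^{(1)}$ is followed (in singular time) by a superslow segment on $\mathcal{M}^{(1)}$; the linear flow \eqref{eq:superslowM1} has $p_1(a)$ as its unique equilibrium, so the only nontrivial orbit through this point within $\mathcal{M}^{(1)}$ is the one moving monotonically toward decreasing $w$, parametrising some segment $\mathcal{M}^{(1)}(w^\ast,a)$ terminating at $p_1(w^\ast) = p_3(w^\ast,0)$. From there a slow orbit along a single Fenichel fibre $\mathcal{M}^{(3),w_0}$ must reach $p_3(a,s^\ast)$. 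Since $p_3(w^\ast,0) \in \mathcal{M}^{(3),w^\ast}$ while $p_3(a,s^\ast) \in \mathcal{M}^{(3),a-\mathcal{D}s^\ast}$, matching the Fenichel-fibre index reads $w_0 = w^\ast = a - \mathcal{D}s^\ast$; substituting $s^\ast = a/\mathcal{D}$ forces $w^\ast = w_0 = 0$. The slow flow $\mathcal{D} s_\sigma = s$ then carries $s$ monotonically from $0$ to $s^\ast$ along $\mathcal{M}^{(3),0}$, correctly orienting the slow segment as $\mathcal{M}^{(3),0}(0,a)$.

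Uniqueness is built into every step: the unstable direction of the superslow flow through $p_1(a)$ in $\mathcal{M}^{(1)}$ is one-dimensional; the Fenichel-fibre index $w_0$ is pinned down by the two endpoint constraints on the slow segment; and the two fast heteroclinics are unique by Lemmas \ref{lemma:fc2} and \ref{lemma:fc1} once $c=c^\ast$ is fixed. The main subtlety I expect is bookkeeping rather than analysis: the slow and superslow legs formally spend infinite singular time at their equilibrium endpoints, so one has to confirm that the concatenation in \eqref{eq:singhom} indeed defines a single connected singular orbit in the GSPT sense and that no alternative choice of Fenichel fibre or flow direction can close the loop. Neither is a genuine obstacle, since the matching conditions $c=c^\ast$, $s^\ast = a/\mathcal{D}$ and $w_0 = a - \mathcal{D}s^\ast$ admit a unique compatible solution; the proposition then follows by collecting the four pieces into $\Phi_0$.
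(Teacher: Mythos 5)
Your proposal is correct and follows essentially the same route as the paper's proof: concatenate the two fast heteroclinics through $p_2(a)$ to obtain the fast connection from $p_3(a,s^\ast)$ to $p_1(a)$, then close the loop via a superslow segment on $\mathcal{M}^{(1)}$ followed by a slow segment on a single Fenichel fibre, with the matching condition $w_0 = a - \mathcal{D}s^\ast = 0$ pinning down the fibre as $\mathcal{M}^{(3),0}$. The only difference is one of exposition — you spell out the endpoint-matching computation and the monotonicity/uniqueness of each leg, which the paper leaves implicit.
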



\begin{proof}
The concatenation of the two fast heteroclinic orbits $\phi_{f,1}$ and $\phi_{f,2}$ determines a fast heteroclinic connection from $p_3(a,s^\ast)$ to $p_1(a)$, with $s^\ast = \frac{a}{\mathcal{D}}$ as given in Lemma \ref{lemma:fc1}; Lemma \ref{lemma:fc2} determines the value of $c^\ast$ \eqref{eq:c_ast}.\\
From the definition of $\mathcal{M}^{(3),w_0}$ \eqref{eq:M3_w0}, it follows that $p_3(a,s^\ast) \in \mathcal{M}^{(3),0}$. From the fibration of $\mathcal{M}^{(3)}$ along $\mathcal{M}^{(1)}$, combined with the slow-fast structure of the flow \eqref{eq:slowflow_M3} on $\mathcal{M}^{(3)}$, it follows that the slow singular segment is composed of a superslow segment on $\mathcal{M}^{(1)}$ from $p_1(a,0,0,0)$ to the origin, followed by a slow segment on $\mathcal{M}^{(3),0}$ from the origin to $p_3(a,s^\ast)$.
\end{proof}
For a sketch of the singular skeleton $\Phi_0$, see Figure \ref{fig:full}.

\begin{figure}[H]
\begin{center}
\begin{tikzpicture}
\draw[thick,->] (0,0) -- (7,0);
\node [below right] at (7,0) {$v$};
\draw[thick,->] (0,0) -- (3.6,1.8);
\node [right] at (3.6,1.8) {$s$};
\draw[thick,->] (0,0) -- (0,6);
\node [left] at (-0.1,6) {$w$};

\draw[thick,dotted,gray] (0,4) -- (3.6,5.8);
\begin{scope}[very thick,royalpurple,decoration={
    markings,
    mark=at position 0.5 with {\arrow{{Stealth}}}}
    ] 
    \draw[postaction={decorate}] (0,4) -- (0,0);
\end{scope}
\draw[fill,blue] (0,0) circle [radius=2pt];
\node[left] at (0,0) {$0$};
\draw[fill,royalpurple] (0,4) circle [radius=2pt];
\node[left] at (0,4) {$a$};
\node[below right] at (0,4) {\textcolor{royalpurple}{$p_1(a)$}};
\node[below] at (2,1) {$s^\ast$};
\draw[fill,blue] (2,5) circle [radius=2pt];
\node[above left] at (2,4.8) {\textcolor{blue}{$p_3(a,s^\ast)$}};
\draw[thick,dotted,gray] (2,1) -- (2,5);
\begin{scope}[very thick,blue,decoration={
    markings,
    mark=at position 0.5 with {\arrow{{Stealth}{Stealth}}}}
    ] 
    \draw[postaction={decorate}] (0,0) -- (2,5);
\end{scope}
\draw[thick,dotted,gray] (0,4) -- (6,4); 
\draw[thick,dotted,gray] (4,0) -- (4,4); 
\node[below] at (4,0) {$a$}; 
\begin{scope}[very thick,darkpastelgreen,decoration={
    markings,
    mark=at position 0.7 with {\arrow{{Stealth}{Stealth}{Stealth}}}}
    ] 
    \draw[postaction={decorate}] (2,5) .. controls (2,4.7) and (2.5,4.5) .. (4,4);
\end{scope}
\begin{scope}[very thick,darkpastelgreen,decoration={
    markings,
    mark=at position 0.7 with {\arrow{{Stealth}{Stealth}{Stealth}}}}
    ] 
    \draw[postaction={decorate}] (4,4) .. controls (3,4) and (1,4) .. (0,4);
\end{scope}
\draw[fill,darkpastelgreen] (4,4) circle [radius=2pt];
\node[above right] at (4,4) {\textcolor{darkpastelgreen}{$p_2(a)$}};
\end{tikzpicture}
\end{center}
\caption{The singular skeleton orbit $\Phi_0$ \eqref{eq:singhom} given in Proposition \ref{thm:singsol}, obtained by matching superslow (purple), slow (blue) and fast (green) orbits of the reduced problems \eqref{eq:superslowM1}, \eqref{eq:slowflow_M3} and the layer problem \eqref{eq:laypb}.}
\label{fig:full}
\end{figure}
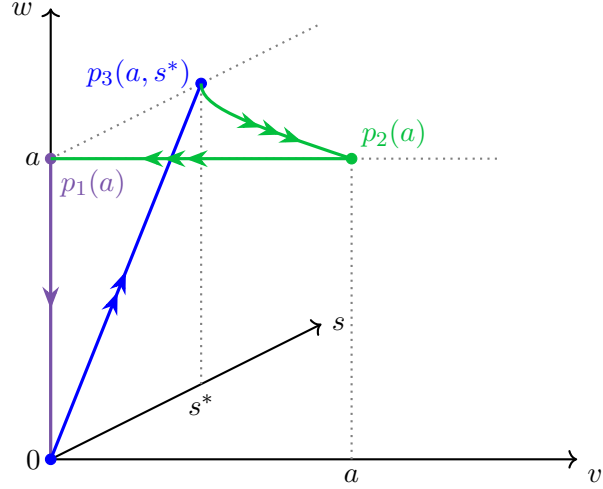

\subsection{Persistence} \label{sec:persist}

The singular skeleton orbit $\Phi_0$ \eqref{eq:singhom} provides the backbone for the main result of this paper: the existence of a homoclinic solution in system \eqref{eq:postscalingw}. The persistence of $\Phi_0$ for $0<\delta \ll 1$ is given in the following Theorem.

\begin{theorem} \label{thm:pertsol}
Let $0<\delta \ll 1$ be sufficiently small, en let $a$, $\mathcal{B}$, $\mathcal{D}$, $\mathcal{H}$ be fixed $\mathcal{O}(1)$ and positive. There exists a unique value $c=c^\ast_\delta$ for which a unique orbit $\Phi_\delta$ to \eqref{eq:postscalingw} exists that is homoclinic to $p_1(a) = (a,0,0,0)$. Moreover, $\Phi_\delta$ is $\mathcal{O}(\delta)$ close to $\Phi_0$ \eqref{eq:singhom}, and $c^\ast_\delta$ is $\mathcal{O}(\delta)$ close to $c^\ast$ \eqref{eq:c_ast}.
\end{theorem}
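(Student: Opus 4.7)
The proof follows the standard geometric singular perturbation theory strategy: combine Fenichel's invariant manifold theorem with an Exchange Lemma argument to track the perturbed unstable manifold $\mathcal{W}^u(p_1(a))$ through a tubular neighbourhood of the singular skeleton $\Phi_0$, and use the wave speed $c$ as a codimension-one matching parameter that forces $\mathcal{W}^u(p_1(a))$ back into $\mathcal{W}^s(p_1(a))$. Concretely, I would first apply Fenichel's theorem to the critical manifold $\mathcal{M}^{(3)}$, which is normally hyperbolic away from $\{s=0\}$ by the eigenvalue computation \eqref{eq:eigenvalues_p3}. On any compact subset bounded away from $\{s=0\}$ this yields a locally invariant 2-dimensional manifold $\mathcal{M}^{(3)}_\delta$ that is $\mathcal{O}(\delta)$-close to $\mathcal{M}^{(3)}$, together with smooth stable and unstable foliations. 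Inside $\mathcal{M}^{(3)}_\delta$ the reduced flow \eqref{eq:slowflow_M3} carries its own slow-fast structure (an expanding $s$-mode at rate $1/\mathcal{D}$ and a superslow $w$-mode of size $\mathcal{O}(\delta)$); a second application of Fenichel's theorem then produces the perturbed superslow manifold $\mathcal{M}^{(1)}_\delta$, equipped with the perturbed Fenichel fibration $\mathcal{M}^{(3),w_0}_\delta$.

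Next, I would establish persistence of the two fast heteroclinics. The orbit $\phi_{f,2}$ of Lemma \ref{lemma:fc2} lives in the invariant hyperplane $\{s=0\}$ and is a transverse intersection, parametrised by $c$, of $\mathcal{W}^u(p_2(a))$ and $\mathcal{W}^s(p_1(a))$ within that hyperplane; a Melnikov-type argument identical in spirit to \cite{Carter2018} singles out a unique curve $c = c^\ast_\delta = c^\ast + \mathcal{O}(\delta)$ along which the perturbed connection survives. The orbit $\phi_{f,1}$ of Lemma \ref{lemma:fc1} lies in the globally invariant plane $\Pi$ \eqref{eq:plane_Pi} and joins the saddle $p_3(a,s^\ast)$ to the sink $p_2(a)$ within $\Pi$; this connection is structurally stable and persists for all sufficiently small $\delta>0$ as the transverse intersection of $\mathcal{W}^u(\mathcal{M}^{(3)}_\delta)$ with $\mathcal{W}^s(p_2(a))$, the latter being preserved by the hyperbolicity \eqref{eq:eigenvalues_p2} of $p_2(a)$.

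The heart of the argument is the application of the Exchange Lemma of Jones and Kopell to track $\mathcal{W}^u(p_1(a))$ through the slow-superslow passage on $\mathcal{M}^{(3)}_\delta$. Starting from $p_1(a)$, the manifold $\mathcal{W}^u(p_1(a))$ is first advected superslowly along $\mathcal{M}^{(1)}_\delta$ towards the origin, then acquires exponential expansion in the $s$-direction along $\mathcal{M}^{(3),0}_\delta$ until it arrives $\mathcal{O}(\delta)$-close to $p_3(a,s^\ast)$, then follows the perturbed $\phi_{f,1}$ into a neighbourhood of $p_2(a)$, and finally escapes along the perturbed $\phi_{f,2}$ back into a neighbourhood of $p_1(a)$. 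The Exchange Lemma supplies $C^1$-proximity of $\mathcal{W}^u(p_1(a))$ to the corresponding piece of $\Phi_0$ at each stage. The homoclinic matching condition $\mathcal{W}^u(p_1(a)) \cap \mathcal{W}^s(p_1(a)) \neq \emptyset$ is then codimension one in $(c,\delta)$, and the implicit function theorem — seeded by the nondegenerate transversality at $c = c^\ast$ established in Lemma \ref{lemma:fc2} — selects a unique $c = c^\ast_\delta$ and a unique orbit $\Phi_\delta$, $\mathcal{O}(\delta)$-close to $\Phi_0$.

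The main technical difficulty I anticipate is the transition near the origin, where $\mathcal{M}^{(1)}$, $\mathcal{M}^{(2)}$ and $\mathcal{M}^{(3)}$ all meet and the normal hyperbolicity of $\mathcal{M}^{(3)}$ degenerates. The saving grace is that the reduced flow \eqref{eq:slowflow_M3} is linear and decomposes cleanly (cf.~\eqref{eq:w-Ds_flow}) into an expanding $s$-mode and a superslow $w$-mode, so the Fenichel fibration $\mathcal{M}^{(3),w_0}_\delta$ extends in a controlled way down to $s=0$ and no geometric blow-up is needed; this is precisely the simplification over \cite{Carter2018} noted after Theorem \ref{thm:main}. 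Combined with the observation from Section \ref{sec:redprob} that the singular orbit stays bounded away from $\mathcal{M}^{(2)}$, the Exchange Lemma can be applied directly on a tubular neighbourhood of $\mathcal{M}^{(1)}(0,a) \cup \mathcal{M}^{(3),0}(0,a)$, which closes the argument.
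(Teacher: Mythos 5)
Your proposal is correct in substance and follows the same geometric singular perturbation strategy as the paper: decompose $\Phi_0$ into superslow/slow/fast segments, persist the reduced and layer dynamics separately, and close the homoclinic with an Exchange-Lemma argument using $c$ as the matching parameter. There is, however, one place where you do unnecessary work that the paper avoids, and it is worth noting because it is precisely the structural feature that makes this problem tractable without blow-up.

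You propose applying Fenichel's theorem to $\mathcal{M}^{(3)}$ (on compact subsets bounded away from $\{s=0\}$) to obtain a locally invariant $\mathcal{M}^{(3)}_\delta$, and then applying Fenichel again inside it to obtain $\mathcal{M}^{(1)}_\delta$. But as observed in Section~\ref{sec:redprob}, both $\mathcal{M}^{(3)}$ and $\mathcal{M}^{(1)}$ are \emph{exactly} invariant under the full flow \eqref{eq:postscalingw} for every $\delta$, not merely critical manifolds of the layer problem. So Fenichel persistence is not needed for them — $\mathcal{M}^{(3)}_\delta = \mathcal{M}^{(3)}$ and $\mathcal{M}^{(1)}_\delta = \mathcal{M}^{(1)}$ identically — and the worry about where normal hyperbolicity of $\mathcal{M}^{(3)}$ fails at $s=0$ disappears. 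Moreover, the reduced flow \eqref{eq:slowflow_M3} on $\mathcal{M}^{(3)}$ is linear and integrates explicitly (see \eqref{eq:w_s_explicit_sols}--\eqref{eq:w(s)_explicit_sol}), so the Fenichel fibration and the entry/exit behaviour near the origin are given by closed-form expressions rather than abstract estimates. The paper exploits exactly this: it parametrises the unstable manifold of $p_1(a)$ inside $\mathcal{M}^{(3)}$ explicitly via $w_3(s;k_1)$, defines an explicit exit section $\Sigma$ near $p_3(a,s^\ast)$, tracks the strongly stable foliation of $\mathcal{M}^{(1)}$ backwards through $\phi_{f,2}$ and $\phi_{f,1}$ to the curve $\Gamma\subset\mathcal{M}^{(3)}$, and reads off the transverse intersection $\Gamma\cap\Sigma=\{p_3(a,s^\ast)\}$, after which Fenichel theory and the Exchange Lemma are only invoked for the fast-slow transitions off $\mathcal{M}^{(3)}$, and the explicit formulas also supply the quantitative $\mathcal{O}(\delta\log\delta)$ bound on how close the orbit comes to the origin. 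Your forward-shooting version (track $\mathcal{W}^u(p_1(a))$ forward into $\mathcal{W}^s(p_1(a))$) is the mirror image of the paper's backward-tracking argument and is equally valid; your Melnikov-plus-implicit-function-theorem framing of the $c$-selection is in fact slightly more explicit than the paper's, which leaves that step mostly to the reference to \cite{Carter2018}. Overall: same route, but you should replace the two Fenichel-persistence steps by the simple observation that $\mathcal{M}^{(3)}$ and $\mathcal{M}^{(1)}$ are invariant already and solve the (linear) reduced flow explicitly — that is the simplification this model buys you and is the reason no blow-up is needed.
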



\begin{proof}
Our goal is to show that the singular skeleton orbit $\Phi_0$ \eqref{eq:singhom} persists for $0<\delta \ll 1$ as a solution to \eqref{eq:postscalingw} that is $\mathcal{O}(\delta)$ close to $\Phi_0$.\\
First, we focus on the dynamics on the manifold $\mathcal{M}^{(3)}$, which is invariant under the flow of \eqref{eq:postscalingw}. The dynamics on $\mathcal{M}^{(3)}$ are given by \eqref{eq:slowflow_M3}. 
System \eqref{eq:slowflow_M3} is linear, and can thus be solved explicitly, yielding
\begin{equation}\label{eq:w_s_explicit_sols}
    w(\sigma)=a +c_1 \mathcal{D} e^{\frac{\sigma}{\mathcal{D}}} +(c_2-c_1 \mathcal{D}) e^{\frac{\delta  \sigma }{1+\delta}}, \qquad s(\sigma) = c_1 e^{\frac{\sigma}{\mathcal{D}}}.
\end{equation}
Equivalently, solving
\[
\frac{dw}{ds} = 1+ \frac{\delta}{1+\delta} \frac{\left( w - \mathcal{D} s - a \right)}{s},
\]
we obtain
\begin{equation}\label{eq:w(s)_explicit_sol}
    w(s)=a+\mathcal{D} s+k_1 (s(1+\delta))^{\frac{ \delta\mathcal{D} }{1+\delta}}=:w_3(s;k_1).
\end{equation}
Therefore, taking a neighbourhood of the point $p_3(a,s^\ast)$ on $\mathcal{M}^{(3)}$ as follows
\begin{equation} \label{eq:sigmasec}
    \Sigma := \left\{ (w,v,q,s)  \, : \, v=q=0, \, w=a, \, |s-s^\ast|< \mu \right\}
\end{equation}
for $\mu$ sufficiently small, we have that the $\alpha$-limit set of $\Sigma$ solely consists of the point $p_1(a)$. The section $\Sigma$ can thus be interpreted as the exit section of the intersection of the unstable manifold of $p_1(a)$ with $\mathcal{M}^{(3)}$ given by
\begin{equation} \label{eq:Wumu}
    \mathcal{W}^u_{\delta,\mu}(p_1(a)) := \left\{ (w,v,q,s) \, : \, v=q=0, \, w_3(s,k_1^+(\mu)) < w < \, w_3(s,k_1^-(\mu)), \, s > 0 \right\},
\end{equation}
where $w_3(s,k_1^\pm(\mu)) = a+ \mathcal{D} s \left(1-\left(\frac{a \pm \mu \mathcal{D}}{\mathcal{D} s}\right)^{1-\frac{\delta \mathcal{D}}{1+\delta}}\right)$, see also Figure \ref{fig:proof2D}. We remark that by definition $\mathcal{W}^u_{\delta,\mu}(p_1(a)) \cap \left\{ w=a \right\} = \Sigma$.

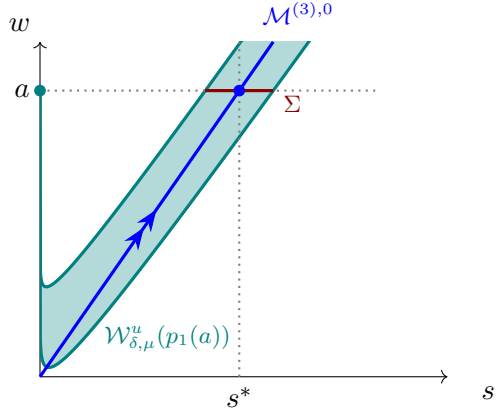
\begin{figure}[H]
\begin{center}
\begin{tikzpicture}
\begin{axis}[
    clip mode=individual,
    axis lines = middle,
    axis line style={->},
    xlabel style={at={(axis description cs:1.1,0)},anchor=north},
    ylabel style={at={(axis description cs:-0.05,1.1)},anchor=north},
    xlabel = {$s$},
    ylabel = {$w$},
    xticklabel=\empty,
    yticklabel=\empty,
    xmin=0, xmax=0.6,
    ymin=0, ymax=1.545]
 
\addplot [name path = A,
    domain = 0:0.5,
    very thick,
    teal,
    samples = 1000] {1.32 - 1.62037*x^(0.0445545) + 4.5*x}; 
 
\addplot [name path = B,
    domain = 0:0.5,
    very thick,
    teal,
    samples = 1000] {1.32 - 1.16617*x^(0.0445545) + 4.5*x}; 

\addplot [teal!30] fill between [of = A and B, soft clip={domain=0:0.5}];
\node[right,teal] at (0.08,0.18) {\footnotesize{$\mathcal{W}^u_{\delta,\mu}(p_1(a))$}};

\draw [thick,dotted,gray] (axis cs:{0,1.32}) -- (axis cs:{0.5,1.32});
\node[left] at (0,1.32) {$a$}; 
\draw [thick,dotted,gray] (axis cs:{0.293333,0}) -- (axis cs:{0.293333,1.545});
\node[below] at (0.293333,0) {$s^\ast$};
\draw [very thick, darkred] (axis cs:{0.243333,1.32}) -- (axis cs:{0.343333,1.32});
\node[right,darkred] at (0.343333,1.26) {\footnotesize{$\Sigma$}};

\begin{scope}[very thick,blue,decoration={
    markings,
    mark=at position 0.5 with {\arrow{{Stealth}{Stealth}}}}
    ] 
    \draw[postaction={decorate}] (0,0) -- (0.343333,1.545);
\end{scope}
\node[above,blue] at (0.38,1.58) {\footnotesize{$\mathcal{M}^{(3),0}$}};
\draw[fill,teal] (0,1.32) circle [radius=2pt];
\draw[fill,blue] (0.293333,1.32) circle [radius=2pt];
 
\end{axis}
\end{tikzpicture}
\end{center}
\caption{Sketch of the dynamics on $\mathcal{M}^{(3)}$ as described in the proof of Theorem \ref{thm:pertsol}. The teal set corresponds to $\mathcal{W}^u_{\delta,\mu}(p_1(a))$ as defined in \eqref{eq:Wumu}, the red section represents $\Sigma$ as in \eqref{eq:sigmasec}, and the blue curve shows the set $\Gamma$ defined in \eqref{eq:Gamma}. The teal and blue points correspond to $p_1(a)$ and $p_3(a,s^\ast)$, respectively.}
\label{fig:proof2D}
\end{figure}

The concatenation of the pair of heteroclinics determined in Lemma \ref{lemma:fc2} and Lemma \ref{lemma:fc1} provides a heteroclinic connection between $p_1(a)$ and $p_3(a,s^\ast)$ via $p_2(a)$. We first observe that the argument used in the proof of Lemma \ref{lemma:fc2} remains valid for other values of $w$, with associated speed value $c = c^\ast_w = (w \, \theta_0)^{2/3}$. Furthermore, the construction used in the proof of Lemma \ref{lemma:fc1} also carries over for other values of $w$, as an invariant hyperplane containing a heteroclinic from $p_3(w,s_w^\ast)$ to $p_2(w)$ can be found when $s = s_w^\ast = \frac{w}{\mathcal{D}}$.\\
Consequently, let us consider the neighbourhood of $p_1(a)$ given by $\mathcal{M}^{(1)} (a-\mu \mathcal{D},a+\mu \mathcal{D})$ and track its singular fast flow backwards until it reaches again $\mathcal{M}^{(3)}$. Our aim is to show that on this hyperplane $\mathcal{W}^s(\mathcal{M}^{(1)} (a-\mu \mathcal{D},a+\mu \mathcal{D}))$ intersects $\Sigma$ transversely in the point $p_3(a,s^\ast)$, which in turn will guarantee the persistence of a perturbed solution $\mathcal{O}(\delta)$-close to the singular one.\\
We start by observing that $\mathcal{M}^{(1)} (a-\mu \mathcal{D},a+\mu \mathcal{D})$ admits a strongly stable foliation, which by construction we can follow backwards until $\mathcal{M}^{(2)}$ - a manifold of normally hyperbolic equilibria of the layer problem \eqref{eq:laypb}. This set thus corresponds to the 2-dimensional manifold $\mathcal{W}^u(\mathcal{M}^{(2)} (a-\mu \mathcal{D},a+\mu \mathcal{D}))$. This manifold transversely intersects the 2-dimensional manifold $\mathcal{W}^s(\mathcal{M}^{(2)} (a-\mu \mathcal{D},a+\mu \mathcal{D}))$, which by construction (we remark here that $w$ acts as a parameter for the layer problem) intersects $\mathcal{M}^{(3)}$ in the curve given by
\begin{equation} \label{eq:Gamma}
    \Gamma = \left\{ (w,v,q,s) \, : \, v=q=0, \, w \in [a-\mu \mathcal{D},a+\mu \mathcal{D}], \, s \in [s^\ast - \mu, s^\ast + \mu] \right\}.
\end{equation}
The curve $\Gamma$ intersects $\Sigma$ in the unique point $p_3(a, s^\ast)$. Thanks to Fenichel theory \cite{Fe79} and the Exchange Lemma \cite{JKK96, Kuehn_2015}, analogously tracking $\mathcal{M}^{(1)} (a-\mu \mathcal{D},a+\mu \mathcal{D})$ for $0 < \delta \ll 1$ until $\mathcal{M}^{(3)}$, will lead to an intersection $\Sigma$ in a point $p_3^\ast(a,s^\ast_\delta)$ which is $\mathcal{O}(\delta)$-close to $p_3(a,s^\ast)$; the corresponding final piece of the orbit ``returning'' to $w=a$ on $\mathcal{M}^{(3)}$ is then given by
\begin{equation} \label{eq:w3ast}
 w_3^\ast(s)=a+\mathcal{D} s \left(1-\left(\frac{s^\ast_\delta}{s}\right)^{1-\frac{ \delta \mathcal{D}}{1+\delta}}\right).
\end{equation}

\noindent
We note that for every fixed $0 < \delta \ll 1$ we can choose $\mu$ such that  $|s_\delta^\ast-s^\ast| \leq \mu$ and $w$ is nonnegative in  $\mathcal{W}^u_{\delta,\mu}(p_1(a))$ as long as $\mu$ is $\mathcal{O}(\delta \, \mathrm{log}(\delta))$ and satisfies 
 $\mu \leq \frac{1}{\mathcal{D}}\left(\left(\frac{1+\delta}{\delta  \mathcal{D}^{\frac{\delta \mathcal{D}}{1+\delta }}}\right)^{\frac{1+\delta}{1+\delta(1-\mathcal{D})}}-a\right)$.

%
%
%
%

\begin{figure}
\begin{center}
\scalebox{1.5}{
\begin{tikzpicture}
\begin{axis}[view={45}{13},
    clip mode=individual,
    axis lines*=center,
    xlabel style={at={(axis description cs:0.53,0.03)},anchor=north},
    ylabel style={at={(axis description cs:0.54,0.27)},anchor=north},
    zlabel style={at={(axis description cs:-0.04,0.84)},rotate=-90,anchor=south},
    xlabel=$v$,
    ylabel=$s$,
    zlabel=$w$,
    xticklabel=\empty,
    yticklabel=\empty,
    xmin=0, xmax=1.4,
    ymin=0, ymax=0.5,
    zmin=0, zmax=1.9] 

\addplot3 [name path = A,
        teal,
        very thick,
        domain=0:0.5,
        domain y=0:0,
        samples=1000,
        samples y=1,
    ] (
{0},
{x},
{1.32 - 1.62037*x^(0.0445545) + 4.5*x}
);
 
\addplot3 [name path = B,
        teal,
        very thick,
        domain=0:0.5,
        domain y=0:0,
        samples=1000,
        samples y=1,
    ] (
{0},
{x},
{1.32 - 1.16617*x^(0.0445545) + 4.5*x}
);

\pgfmathdeclarefunction{myfunct}{1}{%
  \pgfmathparse{-0.2 + (0.9*(0.2 + 0.222222*#1))/(0.9 + x)}%
}

\addplot3 [name path = C,
       very thick,
       pastelgreen,
       domain=0.001:1.02,
       domain y=0:0,
       samples y=1,
       samples=50,
   ]  (x,{myfunct(1.02)},1.02);

\addplot3 [name path = D,
       very thick,
       pastelgreen,
       domain=0.001:1.62,
       domain y=0:0,
       samples y=1,
       samples=50,
   ]  (x,{myfunct(1.62)},1.62);

\addplot3 [name path = E,
        very thick,
        pastelgreen,
        domain=0.001:1.02,
        domain y=0:0,
        samples y=50,
        samples=50,
    ]  ({x},{0},{1.02});

\addplot3 [name path = F,
        very thick,
        pastelgreen,
        domain=0.001:1.62,
        domain y=0:0,
        samples y=50,
        samples=50,
    ]  ({x},{0},{1.62});

\addplot3 [teal!30] fill between [of = A and B];
\addplot3 [pastelgreen!40] fill between [of = C and D];
\addplot3 [pastelgreen!40] fill between [of = E and F];

\draw [thick,dotted,gray] (axis cs:{0,0,1.32}) -- (axis cs:{0,0.5,1.32});
\node[left] at (0,0,1.32) {\footnotesize{$a$}}; 
\draw [very thick, darkred] (axis cs:{0,0.243333,1.32}) -- (axis cs:{0,0.343333,1.32});
\node[left,darkred] at (0,0.26,1.45) {\footnotesize{$\Sigma$}};
\node[above,blue] at (0,0.39,1.62) {\footnotesize{$\Gamma$}};
\draw [thick,dashed,blue] (axis cs:{0,0,0}) -- (axis cs:{1.62,0,1.62});

\draw[fill,teal] (0,0,1.32) circle [radius=2pt];
\draw[fill,blue] (0,0.293333,1.32) circle [radius=2pt];
\draw[fill,darkpastelgreen] (1.32,0,1.32) circle [radius=2pt];
\draw[very thick,blue] (axis cs:{0,0.226667,1.02}) -- (axis cs:{0,0.36,1.62});

\begin{scope}[very thick,darkpastelgreen,decoration={
    markings,
    mark=at position 0.7 with {\arrow[line width=0.16mm]{{Stealth}{Stealth}{Stealth}}}}
    ] 
    \draw[postaction={decorate}] (1.32,0,1.32) -- (0,0,1.32);
\end{scope}
\begin{scope}[very thick,darkpastelgreen,decoration={
    markings,
    mark=at position 0.7 with {\arrow[line width=0.16mm]{{Stealth}{Stealth}{Stealth}}}}
    ] 
    \draw[postaction={decorate}] (0,0.293333,1.32) .. controls (0.2,0.203636,1.32) and (0.7,0.0775,1.32) .. (1.32,0,1.32);
\end{scope}

\end{axis}
\end{tikzpicture}    
}
\end{center}
\caption{Sketch of the dynamics described in the proof of Theorem \ref{thm:pertsol} in $(v,\,s,\,w)$-space, for fixed $q=0$. The teal set corresponds to $\mathcal{W}^u_{\delta,\mu}(p_1(a))$ as defined in \eqref{eq:Wumu}, the red section represents $\Sigma$ as in \eqref{eq:sigmasec}, and the blue solid (dashed) curve shows the set $\Gamma$ as in Eq.~\eqref{eq:Gamma} ($\mathcal{M}^{(2)}$ in Eq.~\eqref{eq:critmanifolds}). The teal, green, and blue points correspond to $p_1(a)$, $p_2(a)$, and $p_3(a,s^\ast)$, respectively, and the green section reproduces the unstable eigenspace of a neighbourhood of $\mathcal{M}^{(3),0}(a-\kappa,a+\kappa))$ as defined in \eqref{eq:M1M3w}.}
\label{fig:proof}
\end{figure}

\noindent
Therefore, the orbit homoclinic to $p_1(a)$ perturbs to a solution $\Phi_\delta$ for $0 < \delta \ll 1$ which is $\mathcal{O}(\delta)$ close to $\Phi_0$, as claimed.
\end{proof}

\begin{proof}[Proof of Theorem \ref{thm:main}]
The statement of Theorem \ref{thm:main} directly follows from Theorem \ref{thm:pertsol}, observing that the travelling pulse solution $\left( U, V, S \right) (x,t)$ to system \eqref{eq:ndimmod} corresponds to the homoclinic orbit $\Phi_\delta$ in system \eqref{eq:postscalingw}. Reverting the rescalings given in \eqref{eq:rescaling} with $c = c^\ast$ \eqref{eq:c_ast}, we obtain the leading order expression for the pulse wave speed
\[
 \mathcal{C}=\left( \frac{\mathcal{A}^2 \theta_0^2}{\varepsilon} \right)^{1/3} + \mathcal{O}(1).
\]
\end{proof}

\section{Numerical continuation} \label{sec:numcont}
In order to illustrate the validity of our analysis in Section \ref{sec:Existence}, culminating in Theorem \ref{thm:main}, and to provide insight into the role of the autotoxicity parameters $\mathcal{H}$ and $\mathcal{D}$ on the shape and dynamics of vegetation pulse patterns, we carry out a numerical analysis of system \eqref{eq:prescaling} by numerical continuation using the software AUTO \cite{Doedel1981}, to determine the bifurcation structure of pulse solutions to system \eqref{eq:prescaling}. The results of this numerical analysis are shown in Figures \ref{fig:bifdiag}, \ref{fig:profiles} and \ref{fig:orbits}. All solutions were numerically determined on a domain of size $L=1000$ with periodic boundary conditions.\\
The first row of Figure \ref{fig:bifdiag} shows the relation between the wave speed $\mathcal{C}$ and the parameters $\mathcal{A}$ (panel (a)), $\mathcal{H}$ (panel (b)), and $\mathcal{D}$ (panel (c)). In particular, panel (a) shows a saddle-node bifurcation of travelling pulses; an equivalent bifurcation occurs in the toxicity-free version of \eqref{eq:prescaling} \cite[Figure 8 (a)]{Carter2018}. For the upper branch, an increase of the precipitation rate $\mathcal{A}$ leads to an increase in the migration speed $\mathcal{C}$, which is in line with the leading order expression for $\mathcal{C}$ as a function of $\mathcal{A}$ as stated in Theorem \ref{thm:main}, indicated as a red dashed curve in Figure \ref{fig:bifdiag} (a). A solution on the upper branch for $\mathcal{A}=1.2$ and $\mathcal{C} = 5.8$, indicated by a black dot, is used as a starting point for the other continuations.\\
Panels (b) and (c) show that, for the chosen solution, the wave speed does not measurably depend on the value of $\mathcal{H}$ and $\mathcal{D}$; this is again in line with the statement of Theorem \ref{thm:main}. Note that, in all panels, $\varepsilon = 0.005$ is fixed. Apart from panel (a), this implies that $\delta = \varepsilon \mathcal{C} = 0.029$ is fixed as well. The strong agreement with the statement of Theorem \ref{thm:main} suggests that $\delta=0.029$ is indeed sufficiently small, except near the saddle-node bifurcation point $\mathcal{A} = 0.45$, $\mathcal{C} = 1.90$. The pulse solutions on the lower branch of panel (a) are characterised by a $U$-profile
asymptotically close to the equilibrium value $U_\ast=\mathcal{A}$. Since these profiles have little ecological relevance and numerically turn out to be unstable, we have therefore excluded these solutions from our analysis; see also \cite[Remark 1.3]{Carter2018}.\\
In the second and third row of Figure \ref{fig:bifdiag}, we show the influence on the toxicity parameters $\mathcal{H}$ and $\mathcal{D}$ on the maximum value of the field variables $U$, $V$ and $S$. An increase in $\mathcal{H}$ leads to a monotonic increase of the maximum value of the surface water density $U_\text{max}$ and of the maximum value of the toxicity $S_\text{max}$, whereas the maximum value of the biomass $V_\text{max}$ decreases monotonically. This behaviour is consistent with the fact that an increment of $\mathcal{H}$ corresponds to a higher plant sensibility to toxicity, leading to a lower biomass amplitude. The effect of increasing $\mathcal{D}$ is reciprocal to that of $\mathcal{H}$; this is consistent with the fact that an increase in $\mathcal{D}$ corresponds to a slower growth of the toxicity $S$ (cf.~System \eqref{eq:ndimmod}), reducing the stress of vegetation induced by the toxicity.\\

\begin{figure}[h!]
	\centering
	\includegraphics[width=1\textwidth]{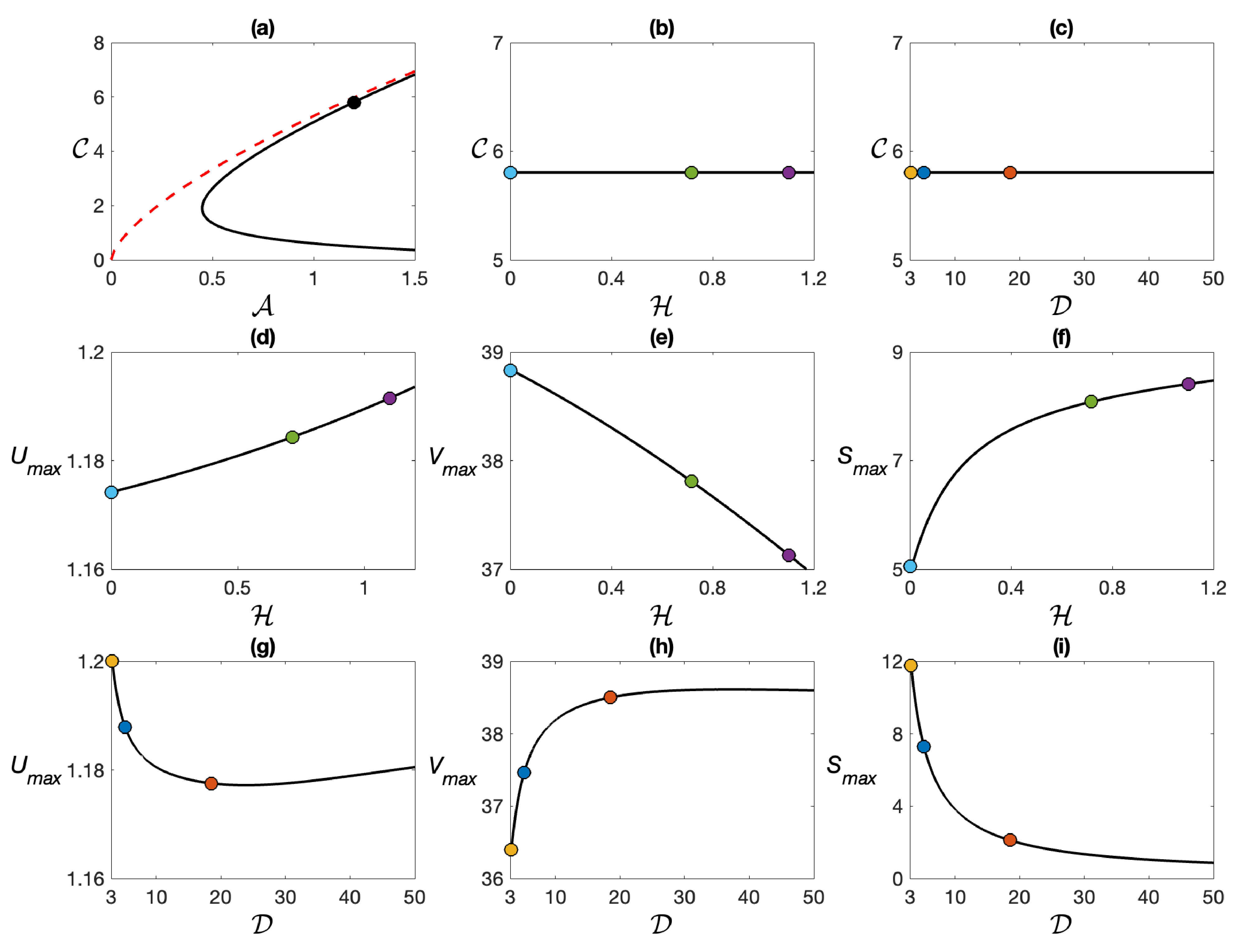}
        \caption{Bifurcation diagrams obtained by performing numerical continuation of system \eqref{eq:ndimmod} with AUTO varying (a) $\mathcal{A}$, (b, d-f) $\mathcal{H}$, and (c, g-i) $\mathcal{D}$. Panels (d-f, g-i) show the dependence of the field variables $(U,V,S)$ on the above parameters, whereas panels (a-c) depict how the migration speed $\mathcal{C}$ is affected by $\mathcal{A}$, $\mathcal{H}$, and $\mathcal{D}$, respectively. The red dashed line in panel (a) represents the migration speed derived analytically in Theorem \ref{thm:main}.\\
        The starting point for all the figures is the same as the one fixed in Figure \ref{fig:num} ($\mathcal{A}=1.2$, $\mathcal{B}=0.45$, $\varepsilon=0.005$, $\mathcal{D}=4.5$ and $\mathcal{H}=1$); subsequently, the corresponding control parameter is varied. All the results in panels (b),(c),(d)-(i) are related to the same point in panel (a) (top branch with $\mathcal{A}=1.2$ and $\mathcal{C}=5.8$), indicated by a black dot, since $\mathcal{A}$ is always fixed in all the other panels and $\mathcal{C}$ is not affected by $\mathcal{H}$ and $\mathcal{D}$.
        \\
        }
        \label{fig:bifdiag}
\end{figure}

In Figure \ref{fig:profiles}, we show the profiles of the individual state variables $U$, $V$ and $S$ for three fixed values of the parameters $\mathcal{H}$ and $\mathcal{D}$, corresponding to the coloured circles shown in Figure \ref{fig:bifdiag}. In panels (a-c), we observe that an increase of the plant sensitivity to autotoxicity, measured by $\mathcal{H}$, leads to a narrowing of the biomass pulse and a decrease of its amplitude, whereas the amplitude of the toxicity pulse increases. In contrast, slowing down the toxicity dynamics by increasing $\mathcal{D}$ (panels (d-f)) increases the with of the biomass pulse and lowers the amplitude of the toxicity component.\\


\begin{figure}[h!]
	\centering
	\includegraphics[width=1\textwidth]{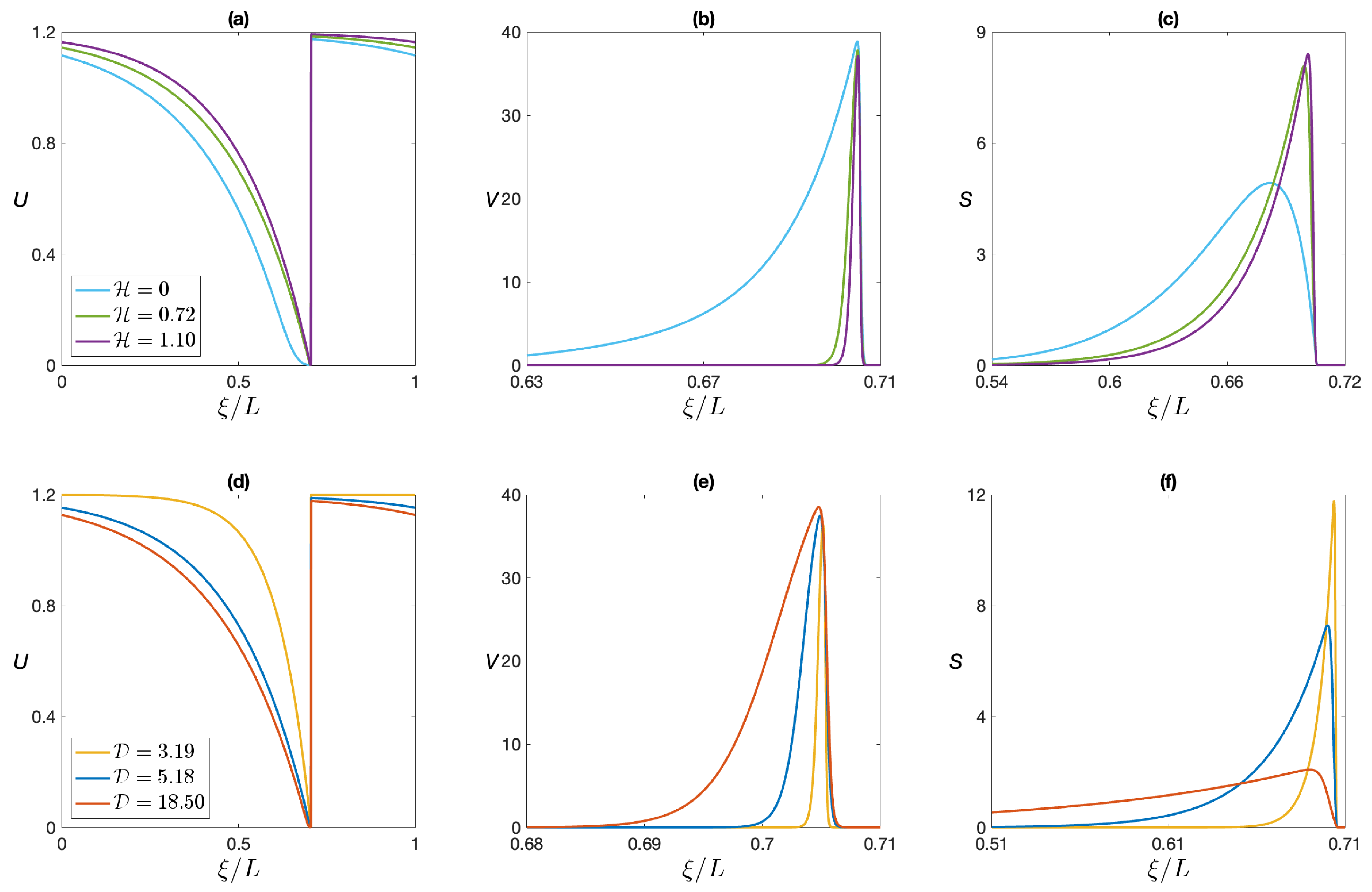}
        \caption{Profiles of the field variables (first column: $U$, second column: $V$, third column: $S$) in terms of the length-rescaled spatial variable $\xi/L$, for different values of $\mathcal{H}$ (panels (a-c)) and $\mathcal{D}$ (panels (d-f)) obtained by performing numerical continuation of system \eqref{eq:ndimmod} with AUTO. Different colours correspond to different parameter configurations as shown in Figure \ref{fig:bifdiag}.\\
        }
        \label{fig:profiles}
\end{figure}

In order to compare the numerical solutions depicted in Figure \ref{fig:profiles} with the analytical results obtained in the asymptotic case $0 < \delta \ll 1$ (see Figures \ref{fig:fast_fig}-\ref{fig:full}), we plot in Figure \ref{fig:orbits} the profiles as homoclinic orbits projected onto $(v,s,w)$-space (first column) and onto $(v,q,s)$-space (second column), with colours corresponding to the solutions shown in Figure \ref{fig:bifdiag} and Figure \ref{fig:profiles}. We observe that the value of $\mathcal{H}$ has little to no influence on the shape of the homoclinic, which corresponds to the observation that the singular skeleton described in Proposition \ref{thm:singsol} does not depend on $\mathcal{H}$. In fact, the value of $\mathcal{H}$ only occurs in the eigenvalues of $p_2$ \eqref{eq:eigenvalues_p2} and $p_3$ \eqref{eq:eigenvalues_p3}, determining the dynamics in the neighbourhood of the stable/unstable manifolds of the equilibria, which disappear in the singular limit $\delta \downarrow 0$. However, the case $\mathcal{H}=0$ is special, see the Discussion.\\
In contrast to the lack of influence of $\mathcal{H}$, the value of $\mathcal{D}$ does significantly change the shape of the homoclinic. Decreasing $\mathcal{D}$ (panels (c,d)) increases the value of $s^\ast = \frac{a}{\mathcal{D}}$ (cf.~Lemma \ref{lemma:fc1}), shifting the location of the equilibrium $p_3(a,s^\ast)$ further away from the $q$-axis. Note that the position of $p_2(a)$ \eqref{eq:equilibria} does not depend on $\mathcal{D}$.




    

\begin{figure}[h!]
	\centering
	\includegraphics[width=1\textwidth]{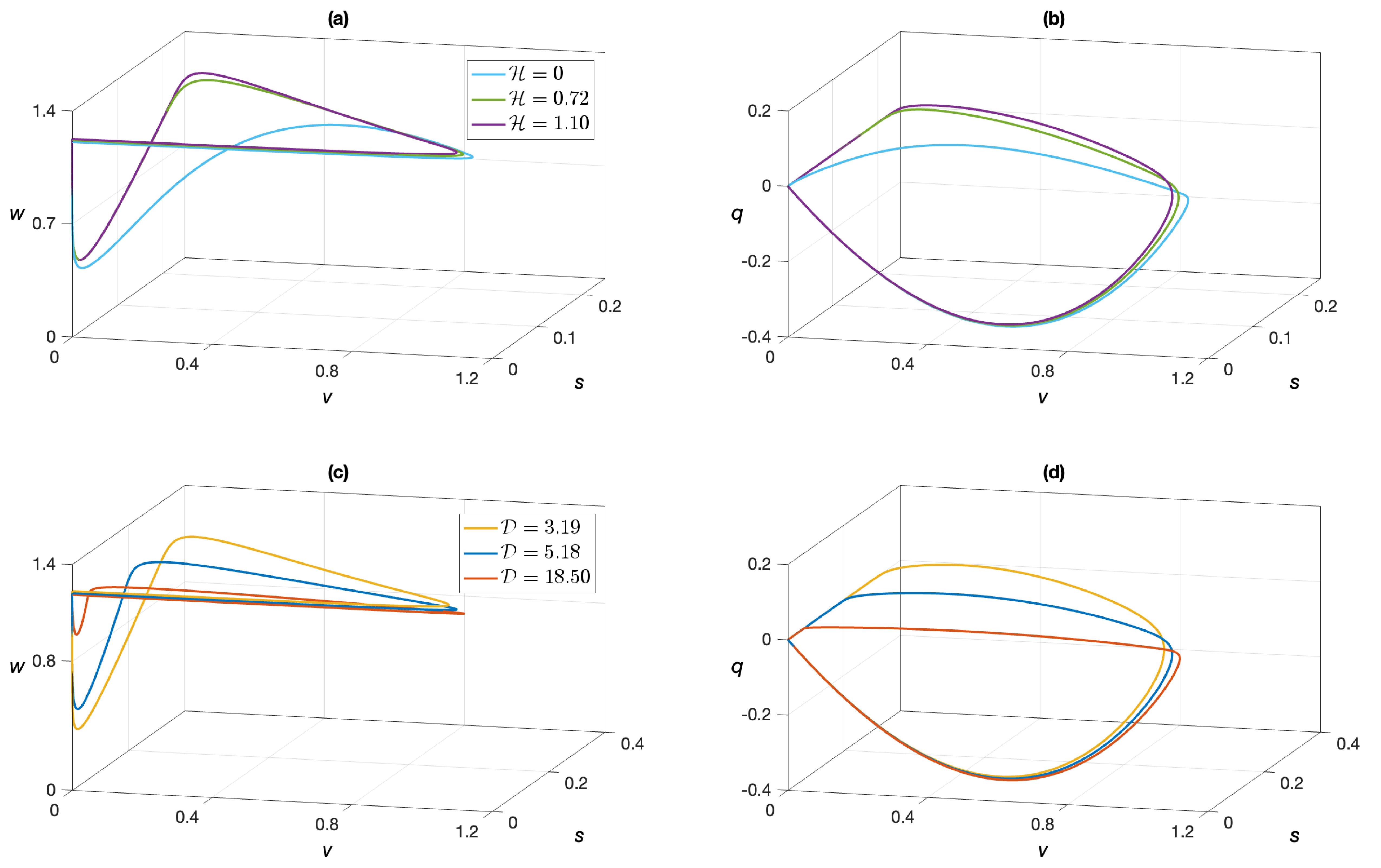}
        \caption{Homoclinic orbits obtained by performing numerical continuation of system \eqref{eq:ndimmod} with AUTO in ($v,s,w$)-space (panels (a,c)) and in ($v,s,q$)-space (panels (b,d)). Different colours correspond to different parameter configurations as shown in Figure \ref{fig:bifdiag}.}
        \label{fig:orbits}
\end{figure}

\section{Discussion}\label{sec:Conclusion}

In this manuscript, we have studied the influence of autotoxicity on far-from-threshold vegetation pulses in sloped semi-arid environments, extending the Klausmeier model with a third component that models the autotoxicity density. After employing a suitable rescaling, we have used geometric singular perturbation theory to establish the existence of a homoclinic orbit to the desert state, whose spatial profile corresponds to numerically observed travelling pulses.\\

The influence of autotoxicity is quantified by two parameters: $\mathcal{H}$, measuring biomass sensitivity to autotoxicity, and $\mathcal{D}$, measuring the autotoxicity decay rate. We observe that, although the shape of the pulse profile (strongly) depends on $\mathcal{H}$ and $\mathcal{D}$ (cf.~Figure \ref{fig:profiles}), the pulse propagation speed does not depend on $\mathcal{H}$ or $\mathcal{D}$, at least to leading order in $\delta$ (cf.~Theorem \ref{thm:main} and Figure \ref{fig:bifdiag} (b)-(c)). That is, in the model studied in this paper, the presence of autotoxicity does not influence the observed dynamics. This is in direct contrast with the results of \cite{Iuorio2021}, where equivalent analytical techniques were used in the context of a similar model to conclude that autotoxicity has a direct influence on far-from-threshold pulse dynamics. In the same context of \cite{Iuorio2021}, equivalent conclusions were drawn in \cite{Marasco2013,Marasco2014}; in addition, the influence of autotoxicity on the dynamics of vegetation patterns has also been observed in other contexts \cite{Iuorio2023pre,Iuorio2023preII}. A crucial difference between the latter models and the model studied in this paper, is that in the original Klausmeier model as studied e.g. in \cite{Carter2018} and in the current paper, water movement is driven by \emph{advection}, whereas in the models studied in \cite{Iuorio2021,Marasco2013,Marasco2014,Iuorio2023pre,Iuorio2023preII}, water movement is driven by \emph{diffusion}. Generalising this meta-observation, one could hypothesise that autotoxicity does influence vegetation dynamics in environments where water movement is diffusion driven, but does not influence vegetation dynamics in environments where water movement is advection driven (such as on sloped topographies). From a more general mathematical perspective, the effect of coupling an ODE to an existing reaction-advection-diffusion system on the dynamics of localised structures in that system clearly merits further research, ideally in the context of model classes, rather than using specific models.\\

Our numerical investigation with AUTO allows us to highlight similarities and differences with the analysis carried out in \cite{Carter2018}. Analogously to \cite{Carter2018}, the constructed orbits correspond to the upper branch of the $\mathcal{A}-\mathcal{C}$ bifurcation diagram shown in Figure \ref{fig:bifdiag}(a), whereas the lower branch is associated to solutions whose $U$-component remains asymptotically close to $U_\ast = \mathcal{A}$. When continuing in $\mathcal{H}$ up to the limit $\mathcal{H}=0$, AUTO correctly retrieves the single pulse solution constructed in \cite{Carter2018}: in this case, the toxicity ($S$) dynamics are still nontrivial, but they decouple from the vegetation ($V$) dynamics (see Figure \ref{fig:profiles}). It is worth noting that, while the numerics straightforwardly connect pulse solutions of \eqref{eq:prescaling} (for nontrivial $\mathcal{H}$) to the pulse solutions considered in \cite{Carter2018} (for $\mathcal{H}=0$), the geometric construction behind these two solution families is substantially different. One major difference is that the variable $w$ as introduced in \eqref{eq:definition_w_variable} contains an $s$-component which does not appear in \cite{Carter2018}, cf.~Figure \ref{fig:orbits}. Another crucial observation is that the equilibria $p_2$ and $p_3$ lose their hyperbolicity for $\mathcal{H}=0$, see \eqref{eq:eigenvalues_p2} and \eqref{eq:eigenvalues_p3}. In particular, this means that the arguments employed in the proof of Theorem \ref{thm:pertsol} cannot be used anymore, as these rely on the (normal) hyperbolicity of several geometric objects such as $\mathcal{M}^{(3)}$.\\
We can conclude that, from a purely mathematical perspective, the inclusion of autotoxicity desingularlises the pulse construction problem. Where in \cite{Carter2018}, the use of singular blowup techniques was needed to study the dynamics near the origin, the presence of the autotoxicity component $s$ suffices to drive the dynamics away from the origin, see Figure \ref{fig:proof2D}. In fact, the minimum distance of the slowest orbit segment to the origin is $\mathcal{O}(\delta \log \delta)$, cf.~\eqref{eq:w(s)_explicit_sol}. 
\\

We did not address the stability of the travelling pulse as a solution to the full PDE \eqref{eq:ndimmod}, which provides an interesting topic of future research. Furthermore, the extension of \eqref{eq:ndimmod} to include inertial effects \cite{Grifo2023} would provide a --to our knowledge-- hitherto unexplored analytical case study for the dynamics of far-from-equilibrium patterns in hyperbolic reaction-transport models.

\paragraph{Acknowledgements.}
AI and GG are members of Gruppo Nazionale per la Fisica Matematica (GNFM), Istituto Nazionale di Alta Matematica (INdAM), which has partially supported this work. AI acknowledges support from a scholarship (\emph{borsa per l'estero}) granted by INdAM to carry out a research stay at the University of Leiden and an FWF Hertha Firnberg Research Fellowship (T 1199-N). GG acknowledges support from MUR (Italian Ministry of University and Research) through PRIN2022-PNRR Project No.~P2022WC2ZZ ``A multidisciplinary approach to evaluate ecosystems resilience under climate change''.

\bibliographystyle{unsrtnat}
\bibliography{bibliography}

\end{document}